\documentclass[11pt]{amsart}

\usepackage{amsmath,amssymb}
\usepackage[T1]{fontenc}
\usepackage[a4paper,top=2.5cm,bottom=2.5cm,inner=3.1cm,outer=3.1cm]{geometry}
\usepackage[hidelinks]{hyperref}

\newcommand{\IP}{\operatorname{IP}}
\newcommand{\cov}{\operatorname{cov}}
\newcommand{\VC}{\operatorname{VC}}

\theoremstyle{plain}
\newtheorem{theorem}{Theorem}[section]
\newtheorem*{theorem*}{Theorem}
\newtheorem{prop}[theorem]{Proposition}
\newtheorem{lemma}[theorem]{Lemma}

\theoremstyle{definition}
\newtheorem{defn}[theorem]{Definition}

\theoremstyle{remark}
\newtheorem{remark}[theorem]{Remark}

\numberwithin{equation}{section}

\title[The Composition Lemma for $n$-dependence]
{The Composition Lemma for $n$-dependence}
\author{Artem Chernikov and Yuyan He}
\date{}

\begin{document}

\begin{abstract}
We prove that a relation obtained by composing arbitrary functions of arity $\leq k$ with a relation definable in an $n$-dependent structure is $kn$-dependent. This confirms a conjecture of Chernikov and Hempel. We also demonstrate optimality of the result.
\end{abstract}

\maketitle

\section{Introduction}
The hierarchy of \(n\)-dependent theories, introduced by Shelah \cite{MR3273451, MR3666349},
extends the class of NIP theories: \(1\)-dependence is NIP, while
\(n\)-dependence excludes the uniform coding of arbitrary subsets of
finite \(n\)-dimensional boxes (see Definition \ref{def:ip}).   Basic properties of $n$-dependent theories are investigated in \cite{CPT}, where in particular the numerical parameter \emph{$\VC_n$-dimension} generalizing the classical $\VC$-dimension and characterizing $n$-dependence is defined and investigated quantitatively \cite[Proposition 3.9]{CPT}. Since then, $n$-dependent theories were studied further in pure model theory \cite{CPT, hempel2016n, chernikov2019mekler, chernikov2021n, CHIII}; the study of VC$_n$-dimension, or \emph{higher VC-theory}, found deep connections to (arithmetic) hypergraph combinatorics \cite{chernikov2020hypergraph, terry2021irregular, terry2023improved, terry2024growth, chernikov2024perfect, terry2025structure, terry2025quadratic, gishboliner2025regularity, sheats2025linear} and higher arity PAC learning  (PAC$_n$ learning) in product spaces \cite{Kobayashi, KotaPAC, chernikov2020hypergraph, chernikov2025higher, CorMal2025}.

The $n$-dependence property of formulas is known to be preserved under certain operations, e.g.~Boolean or continuous operations (\cite[Proposition 6.5]{CPT} and \cite[Proposition 10.5]{chernikov2020hypergraph}) and integration averaging, equivalently Keisler randomization (\cite[Section 10.2]{chernikov2020hypergraph}, generalizing \cite{zbMATH05662727} in the case $n=1$). Here we demonstrate that a relation obtained by composing \emph{arbitrary}  functions of arity $\leq k$ with a relation definable in an $n$-dependent structure remains $kn$-dependent:
\begin{theorem*}[Composition Lemma] 
Let \(n,k\geq1\) be arbitrary, and let
\(\mathcal M\) be an \(\mathcal L'\)-structure so that its reduct \(\mathcal M \restriction \mathcal L\) to a sublanguage \(\mathcal L\subseteq\mathcal L'\) is $n$-dependent.  Let \(I\) be finite
and let \(\rho((x_\alpha)_{\alpha\in I})\) be an
\(\mathcal L\)-formula.  For each \(\alpha\in I\), let $
 S_\alpha\subseteq\{0,\ldots, kn\}$  with \(|S_\alpha|\leq k\) 
and let \(t_\alpha((y_s)_{s\in S_\alpha})\) be an \(\mathcal L'\)-definable map whose values have the
same sorts as \(x_\alpha\).
Let 
\[
 \psi(y_0;y_1,\ldots,y_{kn})
 :=
 \rho\bigl(
   (t_\alpha((y_s)_{s\in S_\alpha}))_{\alpha\in I}
 \bigr).
\]
Then the \(\mathcal L'\)-formula  \(\psi\) is $kn$-dependent in $\mathcal{M}$.
\end{theorem*}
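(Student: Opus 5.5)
The plan is to argue by contradiction, using the finitary characterisation of $kn$-dependence by shattering of finite boxes (Definition~\ref{def:ip}) together with the quantitative $\VC_n$-dimension bounds of \cite[Proposition 3.9]{CPT}. Suppose $\psi(y_0;y_1,\ldots,y_{kn})$ has $\IP_{kn}$. Then for every $m$ there are sets $A_1,\ldots,A_{kn}$ of size $m$ such that every subset of the box $A_1\times\cdots\times A_{kn}$ is cut out by some instance $\psi(b;\cdot)$. So the number of traces on this box is $2^{m^{kn}}$. The aim is an upper bound of the form $2^{o(m^{kn})}$ on the number of traces, coming from $n$-dependence of $\rho$ in the reduct $\mathcal M\restriction\mathcal L$. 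Comparing the two counts gives the contradiction.

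\textbf{Step 1 (normalising the formula).} Write $I=I_0\sqcup I_1$, where $I_0=\{\alpha: 0\in S_\alpha\}$.
\begin{itemize}
\item For $\alpha\in I_0$, the term $t_\alpha$ depends on $y_0$ and on at most $k-1$ of the coordinates $y_1,\ldots,y_{kn}$. After fixing $b$, it is an arbitrary function of those coordinates.
\item For $\alpha\in I_1$, the term $t_\alpha$ depends only on $y_{S_\alpha}$ with $|S_\alpha|\le k$.
\end{itemize}
Thus, for fixed $b$, the trace of $\psi(b;\cdot)$ on the box is the pullback, along the coordinate-wise maps $a\mapsto (t_\alpha(a_{S_\alpha}))_\alpha$, of a trace of $\rho$. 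Every such coordinate map factors through a product of at most $k$ of the $A_i$. Since $\rho$ is an $\mathcal L$-formula and the reduct is $n$-dependent, every partition of the variables $(x_\alpha)_{\alpha\in I}$ into $n+1$ groups yields an $n$-dependent partitioned formula. Using \cite[Proposition 6.5]{CPT}, I will also close under Boolean combinations as needed.

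\textbf{Step 2 (grouping the $kn$ coordinates into $n$ blocks).} Split $\{1,\ldots,kn\}$ into $n$ blocks $B_1,\ldots,B_n$ of size $k$, and view the $kn$-box as an $n$-dimensional box whose $j$-th side is $\prod_{i\in B_j}A_i$, of size $m^k$. Put the variables $x_\alpha$ with $\alpha\in I_0$ into the parameter group, and each $x_\alpha$ with $S_\alpha\subseteq B_j$ into group $j$. On this $n$-box, $\rho$ is then an $n$-dependent formula. The generalised Sauer--Shelah bound from \cite{CPT} gives at most $2^{O((m^{k})^{n-\varepsilon})}=2^{o(m^{kn})}$ traces.

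\textbf{Step 3 (the main obstacle: terms straddling several blocks).} The difficulty is the terms $t_\alpha$ with $S_\alpha$ meeting several blocks, since these do not factor through a single side of the $n$-box. My first approach is to average over all partitions into blocks of size $k$, or to choose the partition adapted to $\psi$. I will then pass, by Ramsey/pigeonhole, to sub-boxes $A_i'\subseteq A_i$ of size $m'\to\infty$ on which each straddling $t_\alpha$ becomes "simple": either injective in one distinguished coordinate, or independent of it. The point is that a function of $\le k$ coordinates, restricted to a generic box, essentially carries information of dimension $\le k$, which is strictly less than the dimension of the box it spans across blocks. I expect to quantify this by counting: a straddling term restricted to the sub-box takes at most $m^{k}$ values, whereas the trace has $m^{kn}$ degrees of freedom. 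An induction on the number of straddling terms should then show that each such term changes the trace count only by a factor of $2^{o(m^{kn})}$.

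This bookkeeping is where I expect the real work to lie. Once it is done, the count $2^{o(m^{kn})}$ contradicts the $2^{m^{kn}}$ traces given by $\IP_{kn}$, and so $\psi$ is $kn$-dependent.
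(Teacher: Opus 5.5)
\textbf{There is a genuine gap, and it appears before Step 3.} In Step 2 you put the variables $x_\alpha$, $\alpha\in I_0$, into the object (``parameter'') group. But after fixing $b$, the value $t_\alpha(b,(a_s)_{s\in S_\alpha\setminus\{0\}})$ is not a fixed tuple. It is a $b$-dependent function of box coordinates, as you note yourself in Step 1. So the trace of $\psi(b;\cdot)$ is not the trace of one instance $\rho(e_b;\text{side}_1,\ldots,\text{side}_n)$ on a fixed $n$-box, and the counting bound from \cite{CPT} does not apply.

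This already fails for $n=1$, $k=2$ and $\psi(y_0;y_1,y_2)=\rho(t_0(y_1,y_2),t_1(y_0,y_2),t_2(y_0,y_1))$. Here there is a single block and no straddling term in your sense. The number of traces is governed by how many maps $a_2\mapsto t_1(b,a_2)$ and $a_1\mapsto t_2(b,a_1)$ arise as $b$ varies, and nothing in your argument bounds this. Terms mixing $y_0$ with parameter coordinates are exactly those in the motivating applications, so Steps 2 and 3 are not bookkeeping but the whole theorem. Your Ramsey canonization does not remove such terms: a term injective in its coordinates stays injective on every sub-box. The range-size heuristic ($m^k$ values versus $m^{kn}$ degrees of freedom) also does not bound the number of traces.

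\textbf{More fundamentally, your model-theoretic input is too weak.} The only such input you use is $n$-dependence of $\rho$ under partitions of its variables, closed under Boolean combinations; everything else is counting and combinatorics on the $t_\alpha$. Proposition~\ref{prop:local-hypotheses-sharp} shows no argument of this kind can work. For $d=2$ it gives structures $\mathcal M_N$ in which $\rho$ is stable under every bipartition. Hence every quantifier-free Boolean combination of instances of $\rho$ is stable under every partition. Yet $\psi_N(y_0;y_1,y_2)$, a composition with $|S_\alpha|=2$, has $\IP_2$. So every hypothesis you use holds with $n=1$, $k=2$, while the conclusion fails.

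The proof must use $n$-dependence of the whole $\mathcal L$-reduct. The paper turns an $\IP_{kn}$-pattern into a $V$-indiscernible representation of the generic ordered hypergraph with support family $\{S_\alpha\}$, and then inducts on the supports:
\begin{enumerate}
\item If the array without a support $E$ is not order-indiscernible, a separating $\mathcal L$-formula $\xi$, which need not be an instance of $\rho$, yields a representation without $E$ (Lemma~\ref{lem:remove-support}).
\item Otherwise an automorphism argument compresses $E$ to a single coordinate (Lemma~\ref{lem:compression}).
\end{enumerate}
Neither step lowers the covering number. That number is at least $\lceil (kn+1)/k\rceil=n+1$, which gives $\IP_n$ in the reduct.
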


This Composition Lemma was introduced in \cite{chernikov2021n}, and established there in the special case $n=1, k=2$ using an infinitary type-counting criterion and set-theoretic absoluteness \cite[Theorem 5.12]{chernikov2021n}.
Following this, the case $n=1$ and $k$ arbitrary was proved in \cite[Theorem~3.24]{CHIII} using a finitary type-counting criterion and an array shattering lemma generalizing Sauer-Shelah. The general case, confirmed by our theorem here, was conjectured in \cite[Conjecture~3.27]{CHIII}. The original motivation in  \cite{chernikov2021n} was to establish \(2\)-dependence of
infinite-dimensional non-degenerate bilinear spaces over NIP fields; this was generalized to  \(n\)-dependence of non-degenerate 
alternating \(n\)-linear spaces over NIP fields in \cite{CHIII}. Further applications include $n$-dependence of generic nilpotent groups and Lie algebras over finite fields \cite{d2025model}; generic nilpotent Lie algebras over algebraically closed fields in a two-sorted language \cite{d2024two}; and pseudofinite quadratic geometries \cite{kestner2024some}. An analogue of the Composition Lemma for NFOP$_2$, with $k=2$ and the $\mathcal{L}$-reduct of $\mathcal{M}$ assumed stable, was proved in \cite{abd2023higher}. An analogous result for the stronger property NOP$_2$ (in the sense of
Takeuchi) was included in the preliminary version of \cite{CHIII}, and will be published in future work.

In fact, we obtain the Composition Lemma in a sharper form depending on the sets of
variables used by the maps.  Namely, let 
\(V=\{0,\ldots,d\}\), and suppose $
 \psi(y_0;y_1,\ldots,y_d)
 =
 \rho\bigl(
   (t_\alpha((y_s)_{s\in S_\alpha}))_{\alpha\in I}
 \bigr)$, 
where \(S_\alpha\subseteq V\).  For the family
\(\mathcal S=\{S_\alpha:S_\alpha\ne\varnothing\}\), define its \emph{covering number}   $
c
 := 
 \min_{\mathcal F\subseteq\mathcal S}
 \left(
   |\mathcal F|+
   \left|V\setminus\bigcup\mathcal F\right|
 \right)$. 
Theorem~\ref{thm:composition} shows that, if \(c\geq2\) and \(\psi\)
has \(\IP_d\), then the theory of the \(\mathcal L\)-reduct has
\(\IP_{c-1}\).  If every \(S_\alpha\) has size at most \(k\), then $
 c\geq\left\lceil\frac{d+1}{k}\right\rceil$, so 
taking \(d := nk\) we obtain that every such composition over an
\(n\)-dependent reduct is \(nk\)-dependent.  Our proof replaces the type-counting and higher-dimensional
shattering parts of the arguments used in \cite{chernikov2021n,CHIII} by an analysis of   indiscernible 
representations of the generic ordered partite hypergraph (Definition \ref{def: array rep}).  An
\(\IP_d\)-pattern for \(\psi\) yields a representation whose support
family is \(\mathcal S\).  The key Proposition~\ref{prop:array-bound} shows
that a representation with support family \(\mathcal S\) forces
\(\IP_{\cov_V(\mathcal S)-1}\).  Its proof is an induction relying on the following two operations on indiscernible representations.  For a
nonsingleton support \(E\), if the array obtained by omitting its
\(E\)-component is not order-indiscernible,
Lemma~\ref{lem:remove-support} produces a representation of the generic hypergraph omitting
\(E\) without decreasing the covering number (this is a refinement of the characterization of $n$-dependence via indiscernible collapse \cite[Theorem 5.4]{CPT}, which generalizes the NIP case  \cite{scow2012characterization}).  Otherwise,
Lemma~\ref{lem:compression} compresses all coordinates in \(E\) to
one new coordinate, again without decreasing the covering number (the automorphism argument is based on the proof in \cite[Section 5]{chernikov2021n}, but replaces type-counting with a direct shattering extraction).

In Section \ref{sec: Optimality} we demonstrate optimality of our result. In Proposition \ref{prop:local-hypotheses-sharp} we show that the global assumption that $\mathcal{M}\restriction \mathcal{L}$ is necessary, i.e.~simply assuming that the formula  $ \rho$ is $n$-dependent under arbitrary partitions of its variables is insufficient, already in the NIP case $n=1$ --- this gives a
negative answer to \cite[Problem~3.26(2)]{CHIII}. We also observe that  both the general
covering-number bound in Proposition \ref{prop:array-bound} and the uniform bound \(nk\) in Theorem~\ref{thm:composition} are optimal
(Propositions~\ref{prop:array-sharp} and
\ref{prop:composition-sharp}).

\subsection*{Acknowledgements}
The second author thanks Nick Ramsey for helpful discussions.
The first author was partially supported by the NSF
Research Grants DMS-2246598, DMS-2554164 and by the Alexander von Humboldt Foundation.
\subsection*{AI Statement}

All results are due to the authors. ChatGPT 5.6 was used to help with organizing the presentation and in preparation of the manuscript.

\section{$n$-dependence and representations of the generic hypergraph}

Fix a complete theory \(T\) and work in a monster model
\(\mathbb M\models T\).  For \(m\geq1\), write
\([m]=\{1,\ldots,m\}\).  For sets \(X\) and \(Q\), write \(X^Q\) for
the set of maps \(Q\to X\).  If
\(\bar u=(u_q)_{q\in Q}\in X^Q\), then
\(\bar u|_S=(u_q)_{q\in S}\) for \(S\subseteq Q\).

\begin{defn}\label{def:ip}
For \(r\geq1\), a formula
\(\varphi(x;y_1,\ldots,y_r)\) has \(\IP_r\) if, for every
\(m\), there are rows $(b_i^j:i\in[m])$ for $j\in[r]$ 
such that for every \(A\subseteq[m]^r\) some \(e_A\) satisfies
\[
 \varphi(e_A;b_{i_1}^1,\ldots,b_{i_r}^r)
 \quad\Longleftrightarrow\quad
 (i_1,\ldots,i_r)\in A.
\]
A theory is \(r\)-dependent if none of its formulas has \(\IP_r\).
\end{defn}

\begin{remark}\label{rem:basic}
\(\IP_s\) implies \(\IP_r\) for \(s\geq r\), by fixing one parameter
in each of \(s-r\) rows.  A formula with parameters having \(\IP_r\)
also gives a parameter-free formula with \(\IP_r\), by moving the
parameters into the object-variable block. Note also that naming or forgetting parameters does not affect $r$-dependence of $T$ (at the price of changing the formula and the witnessing rows). \end{remark}

\subsection{The index structure}
Let
\(\mathcal K_V\) be the class of finite \(V\)-partite structures
whose parts \(P_q\) may be empty, with a linear order \(<_q\) on each
part and an arbitrary relation $
R\subseteq\prod_{q\in V}P_q$. 
Let \(\mathcal G_V\) be the Fra\"{\i}ss\'e limit of
\(\mathcal K_V\). Denote the language by $\mathcal{L}_{V}$, and let $\mathcal{L}_{V}^{-} := \mathcal{L}_{V} \setminus \{R\}$. We let $
 P_S := \prod_{q\in S}P_q^{\mathcal G_V}$ for $S\subseteq V$.
 (We view \(\mathcal G_V\) as a multi-sorted structure with sorts $P_q$. In the equivalent one-sorted coding of the ordered $|V|$-partite $|V|$-uniform hypergraph in \cite{CPT}, the parts were named by predicates and their
orders were concatenated into a single order on \(V\).)

\begin{lemma}[One-point extension]\label{lem:one-point}
Let \(A\) be a finite set of vertices of \(\mathcal G_V\), let
\(q\in V\), and let \(J\) be a nonempty interval of \(P_q\)
determined by a cut over \(A\cap P_q\).  For every map
\[
 \epsilon:
 \prod_{v\ne q}(A\cap P_v)\longrightarrow\{0,1\}
\]
there is \(p\in J\setminus A\) such that
the following holds.  Given
\((a_v)_{v\ne q}\in\prod_{v\ne q}(A\cap P_v)\), define \(b_q=p\)
and \(b_v=a_v\) for \(v\ne q\).  Then
\[
 R((b_v)_{v\in V})
 \quad\Longleftrightarrow\quad
 \epsilon((a_v)_{v\ne q})=1.
\]
\end{lemma}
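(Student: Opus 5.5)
This is a direct application of the extension property of the Fraïssé limit $\mathcal G_V$. Let $B$ be the substructure of $\mathcal G_V$ induced on the finite set $A$. I will build an abstract one-point extension $C=B\cup\{p^*\}\in\mathcal K_V$, embed it over $B$ into $\mathcal G_V$, and take $p$ to be the image of the new point.

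Here is the construction of $C$. Put the new point $p^*$ into part $P_q$. Order $P_q^C=(A\cap P_q)\cup\{p^*\}$ by placing $p^*$ in the cut of $A\cap P_q$ that determines $J$. This is a legitimate linear order, since a cut over a finite linear order is just a position between consecutive elements or at an end. The other parts and their orders are those of $B$. For the relation, let $R^C$ agree with $R^B$ on tuples from $A$. A tuple containing $p^*$ must have $p^*$ in coordinate $q$ and the remaining coordinates $(a_v)_{v\ne q}$ in $\prod_{v\ne q}(A\cap P_v)$. Declare such a tuple to be in $R^C$ exactly when $\epsilon((a_v)_{v\ne q})=1$. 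Then $C$ is a finite $V$-partite structure with linear orders on its parts and an arbitrary relation, so $C\in\mathcal K_V$, and $B$ is a substructure of $C$.

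The extension property (richness) of the Fraïssé limit, which holds by the general theory of Fraïssé limits, gives an embedding $f:C\to\mathcal G_V$ with $f|_B=\mathrm{id}$. Set $p=f(p^*)$. Then the following hold.
\begin{itemize}
\item Since $f$ is injective and $p^*\notin B$, we get $p\notin A$.
\item Since $f$ preserves the sort $P_q$ and the order $<_q$, the point $p$ lies in $P_q$ in the same position relative to $A\cap P_q$ as $p^*$. That position is the given cut, so $p\in J$.
\item Since $f$ preserves $R$ and fixes $A$ pointwise, for every $(a_v)_{v\ne q}$ the tuple $b$ with $b_q=p$ and $b_v=a_v$ satisfies $R(b)$ if and only if $R^C$ holds of the corresponding tuple with $p^*$. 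By construction, that happens if and only if $\epsilon((a_v)_{v\ne q})=1$.
\end{itemize}

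There is no real obstacle here. The only points needing care are two conventions. First, parts may be empty: if some $A\cap P_v$ with $v\ne q$ is empty, the domain of $\epsilon$ is empty and the condition is vacuous, which is fine. Second, one needs to confirm that $\mathcal K_V$ has the hereditary, joint embedding and amalgamation properties, so that the limit exists and is rich. Amalgamation holds by taking the disjoint union over the common substructure, amalgamating the orders on each part, and letting $R$ be the union of the two relations.
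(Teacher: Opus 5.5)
Your proposal is correct and follows the same route as the paper: adjoin a new point in the prescribed cut of $P_q$ with the $R$-incidences dictated by $\epsilon$, note that the resulting finite structure lies in $\mathcal K_V$, and embed it over $A$ into $\mathcal G_V$ by the Fra\"{\i}ss\'e extension property. Your extra checks (injectivity giving $p\notin A$, order preservation giving $p\in J$, and the amalgamation property of $\mathcal K_V$) are fine and simply make explicit what the paper's two-line proof leaves implicit.
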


\begin{proof}
Adjoin \(p\) in the prescribed cut and prescribe the indicated
\(R\)-incidences.  The resulting finite structure belongs to
\(\mathcal K_V\), so the Fra\"{\i}ss\'e extension property embeds it
over \(A\) into \(\mathcal G_V\).
\end{proof}

\subsection{Arrays and representations}

Let \(\mathcal S\) be a finite family of nonempty subsets of \(V\).
An \emph{\(\mathcal S\)-array} is a family \(a=(a_S)_{S\in\mathcal S}\),
where each \(a_S\) maps \(P_S\) to tuples of fixed arity and sorts
(depending on \(S\)) in \(\mathbb M\).  For tuples
\(\bar b,\bar b'\) of the same sorts in  \(\mathbb M\),
write \(\bar b\equiv\bar b'\) if they have the same complete type. Given two tuples $\bigl(\bar p_\ell\bigr)_{\ell<L}$ and $\bigl(\bar p'_\ell\bigr)_{\ell<L}$ with   $\bar p_\ell=(p_{\ell,q})_{q\in S_\ell}$ and $\bar p'_\ell=(p'_{\ell,q})_{q\in S_\ell}$ in $P_{S_{\ell}}$, and $S_\ell \in \mathcal S$, they are \emph{index-isomorphic} if the coordinate assignment $p_{\ell,q}\longmapsto p'_{\ell,q}$ 
is a well-defined isomorphism between the substructures of
\(\mathcal G_V\) induced on $\{p_{\ell,q}\}_{\ell < L, q \in S_{\ell}}$ and $\{p'_{\ell,q}\}_{\ell < L, q \in S_{\ell}}$.  They are
\emph{order-isomorphic} if the same assignment is a well-defined
isomorphism in the reduct forgetting \(R\). (We note that the tuples $\bar p_\ell$ may have common elements as $\ell$ varies.)

\begin{defn}\label{def:indiscernible}
An \(\mathcal S\)-array \(a\) is \emph{\(V\)-indiscernible} if $
 (a_{S_\ell}(\bar p_\ell))_{\ell<L}
 \equiv
 (a_{S_\ell}(\bar p'_\ell))_{\ell<L}$ 
whenever the tuples $\bigl(\bar p_\ell\bigr)_{\ell<L}$ and $\bigl(\bar p'_\ell\bigr)_{\ell<L}$  are index-isomorphic.  It is
\emph{order-indiscernible} if the same implication holds whenever
they are order-isomorphic.
\end{defn}

\begin{defn}\label{def: array rep}
	Let \(\rho((z_S)_{S\in\mathcal S}) \in \mathcal{L}(\mathbb{M})\) be a formula, where \(z_S\)
has the same arity and sorts as the values of \(a_S\).  We say that 
\(a\) \emph{represents \(R\) via \(\rho\)} if
\begin{equation}\label{eq:represents}
 \mathbb M\models
 \rho\bigl((a_S(\bar p|_S))_{S\in\mathcal S}\bigr)
 \quad\Longleftrightarrow\quad
 \mathcal G_V\models R(\bar p)
 \qquad \textrm{ for all } \bar p\in P_V.
\end{equation}
A \emph{representation of \(R\)} is such a pair \((a,\rho)\), and
\(\mathcal S\) is its \emph{support family}.
\end{defn}

\begin{lemma}\label{lem:indiscernibilization}
Given a representation \((a,\rho)\) of \(R\), we can find a \(V\)-indiscernible  one (over the parameters of $\rho$) with
the same support family and representing formula.
\end{lemma}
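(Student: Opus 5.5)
The plan is the standard Ramsey-plus-compactness route, using that \(\mathcal K_V\) (ordered, arbitrary \(R\)) is a Ramsey class. Write \(\Sigma\) for the \(\mathcal L(\mathbb M)\)-type in variables \((z_{S,\bar p})_{S\in\mathcal S,\ \bar p\in P_S}\) expressing three things. First, the representation condition \eqref{eq:represents}: \(\rho((z_{S,\bar p|_S})_{S})\) or its negation, according as \(R(\bar p)\) holds in \(\mathcal G_V\). Second, the sorts. Third, \(V\)-indiscernibility over the parameters \(c\) of \(\rho\): for all index-isomorphic \((\bar p_\ell)_{\ell<L}\) and \((\bar p'_\ell)_{\ell<L}\) and every \(\mathcal L(c)\)-formula \(\theta\), we require \(\theta((z_{S_\ell,\bar p_\ell})_\ell)\leftrightarrow\theta((z_{S_\ell,\bar p'_\ell})_\ell)\). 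A realization of \(\Sigma\) in the monster model is exactly the required \(V\)-indiscernible representation with the same support family and formula, so it suffices to show \(\Sigma\) is finitely satisfiable.

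Fix a finite \(\Sigma_0\subseteq\Sigma\). It mentions finitely many formulas \(\theta_1,\dots,\theta_N\) and finitely many index points, all lying in a finite substructure \(B\subseteq\mathcal G_V\) (close up under taking coordinates, so \(B\) is a finite \(V\)-partite ordered hypergraph in \(\mathcal K_V\)). The task is to find an embedding \(f:B\to\mathcal G_V\) such that setting \(z_{S,\bar p}:=a_S(f(\bar p))\) satisfies \(\Sigma_0\).

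\emph{The representation clauses hold for any embedding.} Since \(f\) preserves \(R\) and the given \(a\) represents \(R\) on all of \(P_V\), these clauses are automatic.

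\emph{The indiscernibility clauses.} Colour each embedding \(g\) of a finite \(C\in\mathcal K_V\) into \(\mathcal G_V\) by the finite tuple of truth values \(\theta_j((a_{S_\ell}(g(\bar p_\ell)))_\ell)\), where \((\bar p_\ell)_\ell\) ranges over the finitely many enumerations of coordinates of \(C\) occurring in \(\Sigma_0\) up to index-isomorphism. By the Ramsey property of \(\mathcal K_V\) (the product Ramsey theorem for ordered partite hypergraphs, as used in \cite[Section 5]{CPT} and \cite{scow2012characterization}), there is a finite \(D\in\mathcal K_V\) such that every such colouring of copies of each relevant \(C\subseteq B\) inside \(D\) has a copy \(B'\cong B\) in \(D\) on which the colour of copies of \(C\) is constant. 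Iterating over the finitely many isomorphism types \(C\) (or colouring with the product colouring at once) handles all of them. Since \(D\) embeds in the Fraïssé limit \(\mathcal G_V\), we obtain \(f:B\cong B'\subseteq\mathcal G_V\). Index-isomorphic tuples in \(B\) then generate isomorphic substructures \(C\), which \(f\) sends to copies of \(C\) in \(B'\) with the same colour. This gives the indiscernibility clauses of \(\Sigma_0\), and compactness finishes the proof.

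The main obstacle is justifying the Ramsey property for \(\mathcal K_V\), a multi-sorted class with empty parts allowed and an arbitrary \(V\)-ary relation across parts. The plan is to cite the known result: ordered \(V\)-partite hypergraphs form a Ramsey class (Nešetřil–Rödl, in the form used in \cite[Section 5]{CPT}). Equivalently, one can note that \(\mathcal G_V\) is interdefinable with the one-sorted ordered partite hypergraph of \cite{CPT}, where the orders are concatenated, and transfer the generalized indiscernible extraction \cite[Theorem 5.4]{CPT} directly. A secondary point is matching "index-isomorphic" with embeddings of the generated substructure. This works because the coordinate map is required to be a well-defined isomorphism of the induced substructures, so it is exactly a partial isomorphism on a finite set, and such a map extends by ultrahomogeneity of \(\mathcal G_V\).
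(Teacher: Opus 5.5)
Your argument is correct. It uses the same engine as the paper, namely the Ramsey property of finite ordered partite hypergraphs, but packages it differently.

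\emph{The paper's route.} It does not verify finite satisfiability by hand. It codes the tuple-indexed array into a vertex-indexed family: it adjoins a new sort $U$ with elements $u_p$ and function symbols $h_S$ satisfying $h_S((u_{p_q})_{q\in S})=a_S(\bar p)$. It then invokes the modeling property for $\mathcal G_V$-indexed indiscernibles \cite[Definition~4.6 and Corollary~4.8]{CPT} to obtain $(c_p)$ that are indiscernible and based on $(u_p)$, and sets $a'_S(\bar p)=h_S((c_{p_q})_{q\in S})$. The representation property comes from the ``based on'' clause, and the result is pulled back into $\mathbb M$ by saturation.

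\emph{Your route.} You write the type $\Sigma$ directly in tuple-indexed variables $z_{S,\bar p}$ over the parameters of $\rho$. You note that the representation clauses are preserved by every embedding of index structures, so only the indiscernibility clauses need the Ramsey property. This avoids the auxiliary expansion and the pullback step. It also shows that the EM-type (``based on'') half of the modeling property is not really needed here, because $R$ belongs to the index language.

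\emph{What your route costs.} You have to supply two things the paper gets for free from the citation:
\begin{enumerate}
\item the Ramsey property for the multi-sorted class $\mathcal K_V$ with empty parts allowed, which you can transfer from the one-sorted coding as you indicate;
\item the standard nested Ramsey argument for several substructures $C_1,\dots,C_N$ at once, using $D_1\to(B)^{C_1}$, $D_2\to(D_1)^{C_2}$, and so on.
\end{enumerate}
A single ``product colouring'' does not literally make sense for copies of different $C$'s, so the iteration is the version to use.

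One citation fix: the extraction of generalized indiscernibles in \cite{CPT} is Corollary~4.8. \cite[Theorem~5.4]{CPT} is the collapse characterization, which the paper uses later for Lemma~\ref{lem:remove-support}.
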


\begin{proof}
Form a new $\mathcal{L}'$-structure $N$ by adding to $\mathbb{M}$ a new sort \(U\) containing distinct elements
\(u_p\) for \(p\in\mathcal G_V\); and,  for each \(S\in\mathcal S\), add a
tuple \(h_S\) of function symbols and interpret $
 h_S((u_{p_q})_{q\in S}) := a_S((p_q)_{q\in S})$ for $(p_q)_{q\in S}\in P_S)$, 
and define these functions arbitrarily on all other tuples.
By \cite[Definition~4.6 and Corollary~4.8]{CPT}, a saturated elementary
extension $\bar{N} \succ N$ contains elements \(c_p \in U(\bar{N}) \), \(p\in\mathcal G_V\), such that
\((c_{p_i})_{i<m}\equiv(c_{p'_i})_{i<m}\)
whenever $(p_i)_{i<m}$ and $(p'_i)_{i<m}$ have the same quantifier free type in $\mathcal G_V$.  Moreover, for every formula
\(\chi((w_i)_{i<m}) \in \mathcal{L}'\) and vertices $p_i \in \mathcal G_V$, there are vertices \(r_i \in \mathcal G_V\)
such that  $(p_i)_{i<m}$ and $(r_i)_{i<m}$ have the same quantifier free type in $\mathcal G_V$ and $\chi((c_{p_i})_{i<m}) 
  \Leftrightarrow 
 \chi((u_{r_i})_{i<m})$. 
Working in $\bar{N}$, set
\[
 a'_S((p_q)_{q\in S}) := h_S((c_{p_q})_{q\in S}).
\]
If \(p_{\ell,q}\mapsto p'_{\ell,q}\) is an isomorphism of the induced substructures of $\mathcal G_V$, the first
property gives
\(\bigl((c_{p_{\ell,q}})_{q\in S_\ell}\bigr)_{\ell<L}
\equiv
\bigl((c_{p'_{\ell,q}})_{q\in S_\ell}\bigr)_{\ell<L}\).
Applying \(h_{S_\ell}\) to the \(\ell\)-th subtuple gives
\((a'_{S_\ell}(\bar p_\ell))_{\ell<L}
\equiv
(a'_{S_\ell}(\bar p'_\ell))_{\ell<L}\).
Thus \(a'\) is \(V\)-indiscernible. For \(\bar p\in P_V\), apply the second property to $\theta(\bar w)=\rho((h_S(\bar w|_S))_{S\in\mathcal S})$. 
There is \(\bar r\in P_V\) such that \(p_q\mapsto r_q\) is an
isomorphism between the induced substructures of $\mathcal G_V$ and $
	\bar{N} \models \rho((a'_S(\bar p|_S))_{S\in\mathcal S}) \Leftrightarrow
\bar{N} \models  \rho((a_S(\bar r|_S))_{S\in\mathcal S})  \Leftrightarrow
\mathbb{M} \models  \rho((a_S(\bar r|_S))_{S\in\mathcal S}) 
  \Leftrightarrow R(\bar r)
 \Leftrightarrow R(\bar p)$. 
Hence \((a',\rho)\) is a representation of $R$ in $\bar{N}|_{\mathcal{L}} \succ \mathbb{M}$, so by saturation we find the required representation  in $\mathbb{M}$.
\end{proof}

\section{The array representation bound}

\begin{lemma}\label{lem:cone}
For each \(q\in V\), let \(W_q\subseteq P_q\) be finite and 
\(c_q\in W_q\).  Put $
 F_q=W_q\setminus\{c_q\}$, $
 W=\bigcup_{q\in V}W_q$, 
and let \(A\) be the substructure of $\mathcal{G}_{V}$ induced on \(W\).  Then there are countable
sets \(I_q\subseteq P_q\setminus W_q\) satisfying the following:  
\begin{enumerate}
\item every element of \(I_q\) has the same cut over \(F_q\) as
      \(c_q\) (in the order on $P_q$);
\item the structure induced on \(\bigcup_qI_q\) is isomorphic to
      \(\mathcal G_V\);
\item whenever \(x_q\in F_q\cup I_q\) for every \(q\), and at least
      one \(x_q\) lies in \(F_q\),
      \[
       R((x_q)_{q\in V})
       \quad\Longleftrightarrow\quad
       R^A((\kappa_q(x_q))_{q\in V}),
      \]
      where  \(\kappa_q(x)=x\) when \(x\in F_q\), and
\(\kappa_q(x)=c_q\) when \(x\in I_q\).

\end{enumerate}
\end{lemma}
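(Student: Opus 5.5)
We build the sets \(I_q\) by a countable back-and-forth/amalgamation construction using the Fra\"{\i}ss\'e extension property. Lemma~\ref{lem:one-point} is the basic tool.

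Fix an enumeration of \(\mathcal G_V\) as \((g_j)_{j<\omega}\), with \(g_j\in P_{q_j}\). We construct injective maps \(\iota_q\) (from \(P_q^{\mathcal G_V}\) into \(P_q\)), one point at a time, with \(I_q=\operatorname{im}\iota_q\). At stage \(j\) the finite structure \(A_j\) consists of \(W\) together with the images \(\iota(g_0),\ldots,\iota(g_{j-1})\). We place \(p=\iota(g_j)\) in \(P_{q_j}\setminus A_j\) so that the following three conditions hold.

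\begin{enumerate}
\item[(a)] \(p\) lies in the cut of \(c_{q_j}\) over \(F_{q_j}\). Within that cut, \(p\) lies in the position relative to the previously placed points \(\iota(g_i)\), \(i<j\), \(q_i=q_j\), dictated by the order of \(g_j\) relative to those \(g_i\).
\item[(b)] For each tuple \((a_v)_{v\ne q_j}\) drawn from \(A_j\) in which every \(a_v\) with \(v\ne q_j\) is of the form \(\iota(g_i)\), \(i<j\), the value of \(R\) at the tuple completed by \(p\) equals the value of \(R\) in \(\mathcal G_V\) on the corresponding preimages with \(g_j\).
\item[(c)] For all other tuples, i.e.\ those in which some \(a_v\in W_v\setminus I_v=W_v\), we prescribe \(R\) so that (3) holds. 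Concretely, we replace \(p\) by \(c_{q_j}\) and each \(\iota(g_i)\) by \(c_{q_i}\), keep elements of \(F_v\) as they are, and read off \(R^A\).
\end{enumerate}

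Two checks are needed before applying Lemma~\ref{lem:one-point} at stage \(j\).

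First, the required cut is a legitimate nonempty interval determined by a cut over \(A_j\cap P_{q_j}\). The cut of \(c_{q_j}\) over \(F_{q_j}\) is an open interval of the dense order \(P_{q_j}\) (parts of \(\mathcal G_V\) are dense without endpoints by the extension property). The previously placed points of that sort lie inside this interval, by (a) at earlier stages. They are distinct from \(c_{q_j}\), but \(c_{q_j}\) itself is an element of \(A_j\). So the prescribed position relative to the \(\iota(g_i)\), intersected with the position relative to \(c_{q_j}\) (choose either side, say just above \(c_{q_j}\) when compatible, otherwise just below), determines a cut over \(A_j\cap P_{q_j}\). That cut is a nonempty interval by density.

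Second, the \(\epsilon\) prescribed in (b)–(c) is well defined. The only overlap is between tuples consisting entirely of \(\iota\)-points and tuples containing some point of \(W\); since \(\iota\)-points avoid \(W\), these classes are disjoint. Tuples that include \(c_v\) itself (a point of \(W_v\) but not of \(F_v\)) are not constrained by (3), so we may set \(\epsilon\) arbitrarily there.

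Lemma~\ref{lem:one-point} then yields \(p\in J\setminus A_j\) realizing \(\epsilon\). We set \(\iota(g_j)=p\).

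Verification of the three conclusions:
\begin{enumerate}
\item[(1)] This is (a).
\item[(2)] \(\iota\) preserves each order \(<_q\) by (a) and preserves \(R\) by (b). Hence \(\iota\) is an isomorphism from \(\mathcal G_V\) onto the structure induced on \(\bigcup_q I_q\).
\item[(3)] Take a tuple with all \(x_q\in F_q\cup I_q\) and at least one \(x_q\in F_q\). Let \(j\) be the largest stage among the \(I\)-coordinates appearing; if there are none, the claim is trivial since \(\kappa\) is the identity. The tuple was fixed at stage \(j\) by (c), because at that time all other coordinates already lay in \(A_j\) and one of them lies in \(F\subseteq W\).
\end{enumerate}

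The main obstacle is the bookkeeping in the first check: ensuring the cut is consistent and nonempty at each stage, and that the \(R\)-prescriptions of types (b) and (c) never conflict. Both reduce to the disjointness observations above.
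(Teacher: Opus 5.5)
Your proposal is correct and follows the paper's argument. The paper first writes down the countable structure on \(W\cup\bigcup_q I_q\), with \(I_q\) a copy of \(\mathcal G_V\) placed above \(c_q\) in its cut over \(F_q\), \(R\) on mixed tuples given by (3), and arbitrary elsewhere; it then embeds this over \(A\) by iterating Lemma~\ref{lem:one-point}, which is exactly the stage-by-stage construction you carry out inline, and your bookkeeping (nonemptiness of the cut, well-definedness of \(\epsilon\), and using the latest-placed coordinate to verify (2) and (3)) fills in details the paper leaves implicit.
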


\begin{proof}
Take disjoint countable sets \(I_q\) whose union carries a copy of
\(\mathcal G_V\).  In each part, place \(I_q\) above \(c_q\) within
the cut of \(c_q\) over \(F_q\).  On
\[
\prod_{q\in V}(F_q\cup I_q)\setminus
\prod_{q\in V}I_q
\]
define \(R\) by the equivalence in item~(3), and define it
arbitrarily elsewhere, retaining its given values on \(A\).

This is a countable structure all of whose finite substructures lie
in \(\mathcal K_V\).  Enumerating its new vertices and repeatedly
using Lemma~\ref{lem:one-point} embeds it into
\(\mathcal G_V\) over \(A\).  The images of the \(I_q\)'s have the
required properties.
\end{proof}

\begin{defn}
	 For a family \(\mathcal S\) of nonempty subsets of a
finite set \(V\), put
\[
 \cov_V(\mathcal S)
 =
 \min_{\mathcal F\subseteq\mathcal S}
 \left(
   |\mathcal F|+
   \left|V\setminus\bigcup\mathcal F\right|
 \right).
\]
Thus \(\cov_V(\mathcal S)\) is the least number of members of
\(\mathcal S\), supplemented by singletons, needed to cover \(V\).
\end{defn}

The following lemma is a refinement of the characterization of $n$-dependence via indiscernible collapse from \cite[Theorem 5.4]{CPT} (which generalizes \cite{scow2012characterization} in the NIP case).
\begin{lemma}[Removing a support]\label{lem:remove-support}
Let \(a\) be a \(V\)-indiscernible \(\mathcal S\)-array and let
\(E\in\mathcal S\).  If
\((a_S)_{S\in\mathcal S\setminus\{E\}}\) is not
order-indiscernible, then
\(R\) has a representation with support family $
 \mathcal S_0\subseteq\mathcal S\setminus\{E\}$. 
In particular,
\(\cov_V(\mathcal S_0)\geq\cov_V(\mathcal S)\).
\end{lemma}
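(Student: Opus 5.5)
The plan is to extract, from a failure of order-indiscernibility of \((a_S)_{S\in\mathcal S\setminus\{E\}}\), a single hyperedge whose \(R\)-value is detected by a formula in the remaining components. Then Lemma~\ref{lem:cone} will blow that hyperedge up into a copy of \(\mathcal G_V\). The final clause is immediate once \(\mathcal S_0\subseteq\mathcal S\setminus\{E\}\subseteq\mathcal S\), since \(\cov_V\) is a minimum over subfamilies and so can only increase when the family shrinks.

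\emph{Step 1 (finding a single detected edge).} By assumption there are order-isomorphic tuples \((\bar p_\ell)_{\ell<L}\) and \((\bar p'_\ell)_{\ell<L}\), with \(\bar p_\ell,\bar p'_\ell\in P_{S_\ell}\) and \(S_\ell\in\mathcal S\setminus\{E\}\), and a formula \(\chi\) such that \(\chi((a_{S_\ell}(\bar p_\ell))_\ell)\) holds while \(\chi((a_{S_\ell}(\bar p'_\ell))_\ell)\) fails. Let \(B\) be the ordered \(V\)-partite set of vertices occurring, with its order reduct fixed. The two tuples induce two \(R\)-relations \(R_0,R_1\) on \(\prod_q B_q\). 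Interpolate between them by a chain \(R_0=Q_0,Q_1,\ldots,Q_N=R_1\) in which consecutive relations differ on exactly one \(V\)-tuple. Each \((B,Q_j)\) lies in \(\mathcal K_V\), so it embeds into \(\mathcal G_V\) by an embedding respecting the given order. By \(V\)-indiscernibility, the truth value of \(\chi\) on the induced array tuple depends only on \(Q_j\). Hence there is some \(j\) at which the value of \(\chi\) changes. This gives a configuration \(A\), with vertices \(W_q\), and a \(V\)-tuple \(\bar c=(c_q)_q\) from \(A\) such that \(\chi\) evaluated on the array along the index tuples of \(A\) is equivalent (after possibly negating \(\chi\)) to \(R(\bar c)\). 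This equivalence holds among all order-isomorphic copies of \(A\) that agree with \(A\) on \(R\) except possibly at \(\bar c\).

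\emph{Step 2 (blowing up).} Apply Lemma~\ref{lem:cone} to \(W_q\) and \(c_q\), obtaining sets \(I_q\) with \(\bigcup_q I_q\cong\mathcal G_V\). For \(\bar x\in\prod_q I_q\), replace each occurrence of \(c_q\) in the index tuples \(\bar p_\ell\) by \(x_q\). By item (1) the order type is unchanged. By item (3), every \(V\)-tuple in the new configuration with at least one coordinate in \(F_q\) has the same \(R\)-value as in \(A\). The only \(V\)-tuple with all coordinates in the \(I_q\)'s is \(\bar x\) itself, since each part of the configuration contains just one non-\(F\) vertex. So Step 1 and \(V\)-indiscernibility give
\[
\chi\bigl((a_{S_\ell}(\bar p_\ell[\bar c\mapsto\bar x]))_{\ell<L}\bigr)\iff R(\bar x).
\]

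\emph{Step 3 (reading off the representation).} Each term \(a_{S_\ell}(\bar p_\ell[\bar c\mapsto\bar x])\) depends only on \(\bar x|_{T_\ell}\), where \(T_\ell=\{q\in S_\ell: p_{\ell,q}=c_q\}\subseteq S_\ell\). The terms with \(T_\ell=\varnothing\) are constants, and we absorb them into the parameters of the new formula \(\rho_0\). For each \(S\in\mathcal S_0:=\{S_\ell:T_\ell\neq\varnothing\}\subseteq\mathcal S\setminus\{E\}\), define \(b_S(\bar x|_S)\) as the concatenation of all \(a_{S_\ell}(\bar p_\ell[\bar c\mapsto\bar x])\) with \(S_\ell=S\). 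This is legitimate because \(T_\ell\subseteq S\), so dependence on fewer coordinates is allowed. Transporting along the isomorphism \(\bigcup_q I_q\cong\mathcal G_V\) yields a representation \((b,\rho_0)\) of \(R\) with support family \(\mathcal S_0\).

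The main obstacle I expect is Step 1. The two index tuples may share vertices across different \(\ell\), so one must check that each intermediate \(Q_j\) is realized by a tuple of index tuples order-isomorphic to the originals. This is handled by embedding the abstract finite structure \((B,Q_j)\) into \(\mathcal G_V\) and pulling back the tuple pattern, rather than modifying vertices one at a time. Step 2 also needs care: the replacement \(c_q\mapsto x_q\) must be applied to every occurrence of \(c_q\) simultaneously, which is what makes \(\bar x\) the unique all-\(I\) hyperedge.
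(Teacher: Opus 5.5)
Your proposal is correct and follows essentially the same route as the paper. You interpolate between the two $R$-structures on the common order reduct one hyperedge at a time to isolate a single detected tuple $\bar c$, blow it up via Lemma~\ref{lem:cone}, and read off the new array from $T_\ell=\{q\in S_\ell: p_{\ell,q}=c_q\}$; the only cosmetic difference is that you absorb the constant components with $T_\ell=\varnothing$ into the parameters of the new formula (which the definition of representation permits), whereas the paper keeps every $S_\ell$ in $\mathcal S_0$ as a constant-valued component.
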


\begin{proof}
Choose \(L<\omega\), \(S_\ell\in\mathcal S\setminus\{E\}\), tuples
\(\bar d_\ell^\epsilon=(d_{\ell,q}^\epsilon)_{q\in S_\ell}
\in P_{S_\ell}\) (\(\ell<L\), \(\epsilon<2\)), and a formula
\(\xi((z_\ell)_{\ell<L})\) such that the assignment
\(d_{\ell,q}^0\mapsto d_{\ell,q}^1\)
(\(\ell<L,\ q\in S_\ell\)) is a well-defined isomorphism between
the corresponding induced substructures after \(R\) is omitted,
while
\[
 \xi\bigl(
   (a_{S_\ell}(\bar d_\ell^0))_{\ell<L}
 \bigr)
 \quad\text{and}\quad
 \xi\bigl(
   (a_{S_\ell}(\bar d_\ell^1))_{\ell<L}
 \bigr)
\]
have opposite truth values.

Put \(W=\{d_{\ell,q}^0:\ell<L,\ q\in S_\ell\}\) and
\(d_{\ell,q}=d_{\ell,q}^0\).  For \(\epsilon<2\), let \(B^\epsilon\)
be the $\mathcal{L}_{V}$-structure on \(W\) for which
\(d_{\ell,q}\mapsto d_{\ell,q}^\epsilon\) is an isomorphism onto the
structure induced by $\mathcal{G}_{V}$ on
\(\{d_{\ell,q}^\epsilon:\ell<L,\ q\in S_\ell\}\).
Then \(B^0\) and \(B^1\) have the same parts and orders.  Write $
 R^{B^0}\mathbin{\triangle}R^{B^1}
 =\{\bar c_1,\ldots,\bar c_N\}$. 
For \(0\leq i\leq N\), let the $\mathcal{L}_{V}$-structure \(C_i\) have the same vertex set, parts,
and orders as \(B^0\), and let $
 R^{C_i}
 :=
 R^{B^0}\mathbin{\triangle}\{\bar c_j:1\leq j\leq i\}$. 
Then \(C_0=B^0\), \(C_N=B^1\), and $
 R^{C_{i-1}}\mathbin{\triangle}R^{C_i}=\{\bar c_i\}$ for all $i \in [N]$. 
Each \(C_i\) belongs to \(\mathcal K_V\), so choose an embedding
\(f_i:C_i\to\mathcal G_V\) and define \(\eta_i\in\{0,1\}\) by
\[
 \eta_i=1
 \quad\Longleftrightarrow\quad
 \mathbb M\models\xi\bigl(
   (a_{S_\ell}((f_i(d_{\ell,q}))_{q\in S_\ell}))_{\ell<L}
 \bigr).
\]
By \(V\)-indiscernibility, \(\eta_i\) is independent of the choice
of \(f_i\).  We may take \(f_0\) to be the inclusion and
\(f_N(d_{\ell,q})=d_{\ell,q}^1\).  Hence \(\eta_0\ne\eta_N\) by assumption, so
\(\eta_{i-1}\ne\eta_i\) for some \(i \in  [N]\). Put \(A^0 := C_{i-1}\), \(A^1 := C_i\), and
\(\bar c=(c_q)_{q\in V} :=\bar c_i\).  After interchanging \(A^0,A^1\) and possibly
negating \(\xi\), we may assume that, for every \(\epsilon<2\) and
every embedding \(f:A^\epsilon\to\mathcal G_V\),
\[
 A^\epsilon\models R(\bar c)
 \quad\Longleftrightarrow\quad
 \mathbb M\models\xi\bigl(
   (a_{S_\ell}((f(d_{\ell,q}))_{q\in S_\ell}))_{\ell<L}
 \bigr)
 \quad\Longleftrightarrow\quad
 \epsilon=1.
\]
Fix an embedding \(A^0\to\mathcal G_V\) and identify \(W\) with its
image in $\mathcal G_V$.

Apply Lemma~\ref{lem:cone} to \(W\) and \(\bar c\), and fix an
isomorphism \(g\) from \(\mathcal G_V\) onto the structure induced
on \(\bigcup_qI_q\).
For \(\ell<L\), let $
 T_\ell := \{q\in S_\ell:d_{\ell,q}=c_q\}$ 
and, for \(\bar y\in P_{T_\ell}\), define
\[
 \widetilde a_\ell(\bar y)
 := 
 a_{S_\ell}((e_{\ell,q})_{q\in S_\ell}),
 \qquad
 e_{\ell,q} := 
 \begin{cases}
  g(y_q),&q\in T_\ell,\\
  d_{\ell,q},&q\notin T_\ell.
 \end{cases}
\]

For an arbitrary \(\bar y\in P_V\), the map fixing
\(W\setminus\{c_q:q\in V\}\) and sending \(c_q\) to \(g(y_q)\) is an
embedding of \(A^0\) into \(\mathcal G_V\) if \(\mathcal G_V \models \neg R(\bar y)\), and
of \(A^1\) if \(\mathcal G_V \models R(\bar y)\), by Lemma~\ref{lem:cone}.  Hence
\begin{equation}\label{eq:removed-support-representation}
 \xi\bigl(
   (\widetilde a_\ell(\bar y|_{T_\ell}))_{\ell<L}
 \bigr)
 \quad\Longleftrightarrow\quad
 R(\bar y).
\end{equation}

Let \(\mathcal S_0 := \{S_\ell:\ell<L\}\).  For \(S\in\mathcal S_0\)
and \(\bar y\in P_S\), set
\[
 a'_S(\bar y)
 :=
 \bigl(
   \widetilde a_\ell(\bar y|_{T_\ell}):
   \ell<L,\ S_\ell=S
 \bigr).
\]
After grouping in \(\xi\) the variables with the same \(S_\ell\), \eqref{eq:removed-support-representation} says that this
\(\mathcal S_0\)-array represents \(R\).  Finally,
\(\mathcal S_0\subseteq\mathcal S\setminus\{E\}\), and so 
\(\cov_V(\mathcal S)\leq\cov_V(\mathcal S_0)\) is clear from the definition.
\end{proof}

The  following lemma is based on the proof in \cite[Section 5]{chernikov2021n}, but replaces type-counting with a direct shattering extraction.

\begin{lemma}[Compressing a support]\label{lem:compression}
Let \(a\) be an \(\mathcal S\)-array representing
\(R\) via \(\rho((z_S)_{S\in\mathcal S})\) and \(V\)-indiscernible over the parameters of $\rho$.  Let
\(E\in\mathcal S\) satisfy $
 2\leq|E|<|V|$, 
and suppose that the array \((a_S)_{S\in\mathcal S\setminus\{E\}}\) is
order-indiscernible.  Put $
 J :=V\setminus E$, $
 V' := \{\ast\}\mathbin{\dot\cup}J$, 
where \(\ast\) is first and \(J\) retains its order from \(V\).
Then the hyperedge relation \(R\) of \(\mathcal G_{V'}\) has a representation with support
family $
 \mathcal S'
 :=
 \{\{\ast\}\}\cup
 \{S\setminus E:S\in\mathcal S,\ S\nsubseteq E\}$,  
and $
 \cov_{V'}(\mathcal S')\geq\cov_V(\mathcal S)$. 
\end{lemma}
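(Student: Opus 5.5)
The plan is to realize the generic hypergraph on $V'$ inside $\mathcal G_V$ by letting each vertex of the new part $P_\ast$ stand for a whole $E$-tuple of vertices of $\mathcal G_V$. The $a_E$-component then carries all the information about $R$. Order-indiscernibility of the remaining components lets us ``freeze'' their dependence on the $E$-coordinates by an automorphism of $\mathbb M$. By compactness (realizing the required type in $\mathbb M$ as at the end of the proof of Lemma~\ref{lem:indiscernibilization}), it suffices to produce, for every finite substructure $D$ of $\mathcal G_{V'}$, arrays on $D$ with support family $\mathcal S'$ that represent $R^D$ via one fixed formula $\rho'$ with the parameters of $\rho$.

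\emph{Step 1: embedding.} Fix such a $D$, with parts $D_\ast$ and $D_j$ for $j\in J$. Build a finite $V$-partite structure $B$ as follows. For $j\in J$, part $j$ is a copy of $D_j$ with its order. For each $x\in D_\ast$ add a fresh tuple $\bar p(x)=(p_q(x))_{q\in E}$, one new vertex in each part $q\in E$, with distinct tuples using distinct vertices. Place all $p_q(x)$ in the same cut over whatever $J$-vertices share a part with them; only the $J$-vertices matter here, and $E$-parts are disjoint from $J$-parts, so any choice is consistent. Declare $R^B(\bar p(x),\bar y)\iff R^D(x,\bar y)$ for $\bar y\in\prod_{j\in J}D_j$, and let $R^B$ be arbitrary elsewhere. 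Since the tuples $\bar p(x)$ are pairwise vertex-disjoint, these prescriptions never conflict, so $B\in\mathcal K_V$ and $B$ embeds into $\mathcal G_V$; identify $B$ with its image. Also fix some $x_0\in D_\ast$.

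\emph{Step 2: freezing.} Let $\mathcal S_1:=\{S\in\mathcal S\setminus\{E\}: S\cap E\neq\varnothing\}$ be the supports that meet $E$ without being contained in it. For $x\in D_\ast$, consider the tuple
\[
\Theta_x := \bigl(a_S(\bar p(x)|_{S\cap E},\bar y|_{S\setminus E})\bigr)_{S\in\mathcal S_1,\ \bar y},
\]
where $\bar y$ ranges over $\prod_{j\in J}D_j$. The index tuples defining $\Theta_x$ and $\Theta_{x_0}$ are order-isomorphic, because $\bar p(x)$ and $\bar p(x_0)$ lie in the same cuts over the $J$-vertices. By order-indiscernibility of $(a_S)_{S\ne E}$ over the parameters of $\rho$, we get $\Theta_x\equiv\Theta_{x_0}$ over those parameters. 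Hence there is an automorphism $\sigma_x$ of $\mathbb M$ fixing the parameters and sending $\Theta_x$ to $\Theta_{x_0}$. The supports $S\subseteq E$ with $S\neq E$ are handled by including their values at $\bar p(x)$ in $a'_\ast(x)$ after applying $\sigma_x$, and supports disjoint from $E$ are unaffected. Now define:
\begin{itemize}
\item $a'_\ast(x) := \sigma_x\bigl((a_S(\bar p(x)|_S))_{S\subseteq E}\bigr)$, which includes the $E$-component;
\item $a'_{S\setminus E}(\bar y|_{S\setminus E}) := a_S(\bar p(x_0)|_{S\cap E},\bar y|_{S\setminus E})$ for $S\in\mathcal S_1$;
\item $a'_S := a_S$ for $S\subseteq J$.
\end{itemize}
If several $S$ give the same $S\setminus E$, concatenate the corresponding tuples; let $\rho'$ be $\rho$ with its variables regrouped accordingly. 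Since $\sigma_x$ fixes the parameters and the $J$-only components are fixed by indiscernibility in the same way (include them in $\Theta_x$), applying $\sigma_x$ gives
\[
\rho'\bigl(a'_\ast(x),\dots\bigr)\iff\rho\bigl((a_S((\bar p(x),\bar y)|_S))_S\bigr)\iff R^B(\bar p(x),\bar y)\iff R^D(x,\bar y).
\]

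\emph{Step 3: compactness.} The formula $\rho'$ is independent of $D$, so compactness yields a representation of $R$ on $\mathcal G_{V'}$ with support family $\mathcal S'$.

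\emph{Step 4: covering number.} Given an optimal $\mathcal F'\subseteq\mathcal S'$ with its singleton supplements covering $V'$, lift it to $V$: replace $\{\ast\}$ (or the singleton supplement covering $\ast$) by $E\in\mathcal S$, replace each $S\setminus E$ by one chosen preimage $S$, and keep singletons in $J$. This cover of $V$ has size at most $\cov_{V'}(\mathcal S')$, so $\cov_V(\mathcal S)\le\cov_{V'}(\mathcal S')$.

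The main obstacle is Step 2. The automorphism must simultaneously freeze every non-$E$ component for all $\bar y$ in the finite grid, and it must fix the parameters of $\rho$. This is exactly why the $E$-tuples have to be placed in a single common cut over the $J$-vertices, and why order-indiscernibility is needed rather than full $V$-indiscernibility: the choice of $\bar p(x)$ changes the $R$-pattern, and only order-indiscernibility ignores $R$.
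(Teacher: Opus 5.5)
Your argument is correct and is essentially the paper's. Order-indiscernibility of $(a_S)_{S\ne E}$, which ignores $R$, gives automorphisms fixing the parameters of $\rho$ that move each $E$-tuple to a reference tuple while freezing every non-$E$ component on the grid, so the $\ast$-component carries the whole $R$-pattern; compactness then finishes, and your covering-number lift is the same as the paper's.

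The differences are only bookkeeping. The paper fixes the $J$-rows $b_S$ once, perturbs a single coordinate $q_0\in E$ via Lemma~\ref{lem:one-point}, freezes the components with $S\subsetneq E$ as parameters $e_S$, and applies compactness to each $\ast$-vertex separately. You instead re-embed each finite $D$ wholesale, absorb the $S\subseteq E$ components into $a'_\ast$, and apply compactness to the whole array. Your ``common cut over the $J$-vertices'' is vacuous, since distinct parts are not ordered against each other. The swap $\bar p(x)\mapsto\bar p(x_0)$ is an order-isomorphism simply because each $E$-part contains exactly one vertex of each tuple.
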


\begin{proof}
We let
\begin{gather*}
	\mathcal A :=\{S\in\mathcal S:S\subsetneq E\}, \   \mathcal B :=\{S\in\mathcal S:S\nsubseteq E\}, \   U_S :=S\setminus E \textrm{ for } S\in\mathcal B.
\end{gather*}
Choose \(q_0\in E\), a tuple
\(\bar d=(d_e)_{e\in E}\in P_E\), and strictly increasing sequences $
 (s_i^q:i \in \mathbb{N}_{>0})\subseteq P_q$ for $q\in J$. 
For \(S\in\mathcal A\), let \(e_S := a_S(\bar d|_S)\). For \(S\in\mathcal B\) and
\(\bar i\in\mathbb N_{>0}^{U_S}\), let 
\[
 r_q :=
 \begin{cases}
  d_q \textrm{ if }   q\in S\cap E,\\
  s_{\bar i(q)}^q \textrm{ if }  q\in U_S
 \end{cases}
 (q\in S),  \qquad 
 b_S(\bar i) := a_S((r_q)_{q\in S}).
\]

\noindent Fix \(m\geq1\) and \(X\subseteq[m]^J\).  The tuples $
 (s_{\bar i(q)}^q)_{q\in J}$ for $\bar i\in[m]^J$
are distinct.  Lemma~\ref{lem:one-point}, applied over $
 \{d_e:e\in E\}\cup
 \{s_i^q:q\in J,\ i\in[m]\}$ 
using the interval above \(d_{q_0}\), gives
\(t_X\in P_{q_0}\setminus\{d_{q_0}\}\) with the following property.
Letting \(\bar d^X=(d_q^X)_{q\in E}\in P_E\) be obtained from
\(\bar d\) by replacing \(d_{q_0}\) with \(t_X\), and for $\bar i\in[m]^J$ letting  
\[
 p_{\bar i,q}^X
 :=
 \begin{cases}
  d_q^X \textrm{ if }  q\in E,\\
  s_{\bar i(q)}^q \textrm{ if }   q\in J
 \end{cases} 
  \ (q\in V), 
 \qquad
 \bar p_{\bar i}^X := (p_{\bar i,q}^X)_{q\in V}\in P_V.
\]
we have that for every \(\bar i\in[m]^J\), $
\mathcal{G}_{V} \models  R(\bar p_{\bar i}^X) 
\ \Longleftrightarrow \ 
 \bar i\in X$. 

The map sending \(t_X\) to \(d_{q_0}\) and fixing every
\(d_q\), \(q\in E\setminus\{q_0\}\), and every
\(s_i^q\), \(q\in J\), \(i\in[m]\), is an isomorphism of the induced structures after
\(R\) is omitted.  By the definitions of \(e_S\) and \(b_S\), only
components \(a_S\) with \(S\ne E\) occur in the two tuples below.
Thus the assumption of order-indiscernibility of the array \((a_S)_{S\in\mathcal S\setminus\{E\}}\)  gives
\[
\left(
 (a_S(\bar d^X|_S))_{S\in\mathcal A},
 (a_S(\bar p_{\bar i}^X|_S))_{
    S\in\mathcal B,\ \bar i\in[m]^J}
\right)
\equiv
\left(
 (e_S)_{S\in\mathcal A},
 (b_S(\bar i|_{U_S}))_{
    S\in\mathcal B,\ \bar i\in[m]^J}
\right).
\]
Choose an automorphism \(\sigma_X\) of $\mathbb{M}$ fixing the 
parameters of $\rho$ and such that
\[
\begin{aligned}
 \sigma_X(a_S(\bar d^X|_S))&=e_S
 &&\textrm{for all } S\in\mathcal A,\\
 \sigma_X(a_S(\bar p_{\bar i}^X|_S))
 &=b_S(\bar i|_{U_S})
 &&\textrm{for all } S\in\mathcal B,\ \bar i\in[m]^J,
\end{aligned}
\]
and let \(x_X := \sigma_X(a_E(\bar d^X))\).
Let \(\theta(z_\ast;(z_S)_{S\in\mathcal B})\) be obtained from
\(\rho((z_S)_{S\in\mathcal S})\) by substituting \(e_S\) for \(z_S\)
when \(S\in\mathcal A\) and renaming \(z_E\) as \(z_\ast\). Since
\(\mathcal S=\{E\}\mathbin{\dot\cup}\mathcal A
\mathbin{\dot\cup}\mathcal B\) and
\(\bar p_{\bar i}^X|_E=\bar d^X\), while
\(\bar p_{\bar i}^X|_S=\bar d^X|_S\) for \(S\in\mathcal A\), the
definitions give, for every \(\bar i\in[m]^J\),
\begin{gather}
	\mathbb M\models
 \theta\bigl(
   x_X;(b_S(\bar i|_{U_S}))_{S\in\mathcal B}
 \bigr) \  \Leftrightarrow  \ 
 \mathbb M\models\rho\bigl(
   (\sigma_X(a_S(\bar p_{\bar i}^X|_S)))_{S\in\mathcal S} \label{eq: theta still reps}\\
   \  \Leftrightarrow \ 
 \mathbb M\models\rho\bigl(
   (a_S(\bar p_{\bar i}^X|_S))_{S\in\mathcal S}
 \bigr)
 \bigr) \Leftrightarrow  \ 
 \mathcal G_V\models R(\bar p_{\bar i}^X)
 \ \Leftrightarrow \ 
 \bar i\in X. \notag
\end{gather}
Here the four equivalences use, respectively, the definitions of
\(\theta,x_X,e_S,b_S\), the fact that \(\sigma_X\) fixes the parameters of $\rho$, the
representation of \(R\) by \(a\), and the choice of \(t_X\).

Choose bijections $
 j_q:P_q^{\mathcal G_{V'}}\longrightarrow\mathbb N_{>0}$ for $q\in J$. For \(p\in P_\ast^{\mathcal G_{V'}}\), let
\[
 X_p := 
 \left\{
  (j_q(v_q))_{q\in J}:
  (v_q)_{q\in J}\in
  \prod_{q\in J}P_q^{\mathcal G_{V'}},
  \ 
  \mathcal G_{V'}\models R(p,(v_q)_{q\in J})
 \right\}.
\]
Every finite subset of \(\mathbb N_{>0}^J\) is contained in
\([m]^J\) for some \(m\).  Hence \eqref{eq: theta still reps} and
compactness give, for each \(p\in P_\ast^{\mathcal G_{V'}}\), a
tuple \(c_\ast(p)\) such that
\[
 \models \theta\bigl(
   c_\ast(p);
   (b_S(\bar i|_{U_S}))_{S\in\mathcal B}
 \bigr)
 \quad\Longleftrightarrow\quad
 \bar i\in X_p
 \qquad\textrm{for all } \bar i\in\mathbb N_{>0}^J.
\]

\noindent Let $
 \mathcal U :=\{U_S:S\in\mathcal B\}$, $\mathcal S' := \{\{\ast\}\}\cup\mathcal U$. 
Define an \(\mathcal S'\)-array
\(a'=(a'_T)_{T\in\mathcal S'}\) by
\[
\begin{aligned}
 a'_{\{\ast\}}((p))
 &:=c_\ast(p)
 &&(p\in P_\ast^{\mathcal G_{V'}}),\\
 a'_U((v_q)_{q\in U})
 & :=
 \bigl(
  b_S((j_q(v_q))_{q\in U})
 \bigr)_{\substack{S\in\mathcal B\\U_S=U}}
 &&(U\in\mathcal U).
\end{aligned}
\]
For \(U\in\mathcal U\), let $
 w_U=(w_{U,S})_{\substack{S\in\mathcal B\\U_S=U}}$, 
where \(w_{U,S}\) has the same arity and sorts as \(z_S\).
Let \(w_{\{\ast\}}\) have the same arity and sorts as \(z_\ast\),
and define $
 \rho'\bigl((w_T)_{T\in\mathcal S'}\bigr)
 :=
 \theta\bigl(
   w_{\{\ast\}};
   (w_{U_S,S})_{S\in\mathcal B}
 \bigr)$. 
For
\(\bar v=(v_q)_{q\in V'}\in P_{V'}^{\mathcal G_{V'}}\), put
\(\bar i := (j_q(v_q))_{q\in J}\).  Then, for all $\bar{v} \in P_{V'}^{\mathcal{G}_{V'}}$, 
\begin{gather*}
	\mathbb M\models
 \rho'\bigl(
   (a'_T(\bar v|_T))_{T\in\mathcal S'}
 \bigr) \  \Leftrightarrow  \ 
 \mathbb M\models
 \theta\bigl(
   c_\ast(v_\ast);
   (b_S(\bar i|_{U_S}))_{S\in\mathcal B}
 \bigr) 
 \Leftrightarrow \ 
 \bar i\in X_{v_\ast}
 \ \Leftrightarrow \ 
 \mathcal G_{V'}\models R(\bar v).
\end{gather*}
Thus \((a',\rho')\) is a representation of \(R\) with support
family \(\mathcal S'\).

Finally, let \(\mathcal F'\subseteq\mathcal S'\) attain
\(\cov_{V'}(\mathcal S')\).  The only member of \(\mathcal S'\)
containing \(\ast\) is \(\{\ast\}\).  If
\(\{\ast\}\notin\mathcal F'\), adjoining it increases
\(|\mathcal F'|\) by one and decreases
\(\left|V'\setminus\bigcup\mathcal F'\right|\) by one.
We may therefore assume that \(\{\ast\}\in\mathcal F'\). For each \(U\in\mathcal F'\setminus\{\{\ast\}\}\), choose
\(S_U\in\mathcal B\) with \(S_U\setminus E=U\), and put $
 \mathcal F
 :=
 \{E\}\cup
 \{S_U:U\in\mathcal F'\setminus\{\{\ast\}\}\}
 \subseteq\mathcal S$. 
Distinct \(U\)'s give distinct \(S_U\)'s, and no \(S_U\) equals
\(E\).  Hence $
 |\mathcal F|=|\mathcal F'|$, 
 $V\setminus\bigcup\mathcal F
 =
 J\setminus\bigcup
   (\mathcal F'\setminus\{\{\ast\}\})
 =
 V'\setminus\bigcup\mathcal F'$. 
Hence $
 \cov_V(\mathcal S)
 \leq
 |\mathcal F|+
 \left|V\setminus\bigcup\mathcal F\right|
 =
 \cov_{V'}(\mathcal S')$. 
\end{proof}

Combining these lemmas, we deduce the key bound on the supports of arrays representing $R$:
\begin{prop}\label{prop:array-bound}
Suppose that an \(\mathcal S\)-array represents \(R\).  Let $
 r :=\cov_V(\mathcal S)-1$. 
If \(r\geq1\), then \(T\) has \(\IP_r\).
\end{prop}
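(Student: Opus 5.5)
We argue by induction on the complexity of the pair \((V,\mathcal S)\), say on \(|V|+\sum_{S\in\mathcal S}|S|\). By Lemma~\ref{lem:indiscernibilization} we may assume the given representation \((a,\rho)\) is \(V\)-indiscernible over the parameters of \(\rho\).

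\emph{Base case.} Suppose every member of \(\mathcal S\) is a singleton. Then \(\cov_V(\mathcal S)=|V|\), so \(r=|V|-1\geq1\). Choose the first coordinate \(q_0\in V\) as the object coordinate. For each \(q\neq q_0\), group the components \(a_{\{q\}}(p)\) into rows indexed by \(P_q\), and let the parameter of \(\rho\) be absorbed as in Remark~\ref{rem:basic}. Then \(\rho\) shatters all subsets of finite boxes in \(\prod_{q\ne q_0}P_q\), since by the extension property (Lemma~\ref{lem:one-point}) every such subset is realized as \(R(p,\cdot)\) for some \(p\in P_{q_0}\). This gives \(\IP_{|V|-1}\). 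If some \(S\in\mathcal S\) equals \(V\), then \(\cov_V(\mathcal S)=1\) and there is nothing to prove. Hence we may assume every \(S\) is a proper subset.

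\emph{Inductive step.} Pick a non-singleton \(E\in\mathcal S\), so \(2\le|E|<|V|\). There are two cases.
\begin{enumerate}
\item If \((a_S)_{S\in\mathcal S\setminus\{E\}}\) is not order-indiscernible, Lemma~\ref{lem:remove-support} gives a representation of \(R\) on \(\mathcal G_V\) with support family \(\mathcal S_0\subseteq\mathcal S\setminus\{E\}\) and \(\cov_V(\mathcal S_0)\ge\cov_V(\mathcal S)\). Since \(\mathcal S_0\) is strictly smaller, the induction hypothesis yields \(\IP_{\cov_V(\mathcal S_0)-1}\). By Remark~\ref{rem:basic} this implies \(\IP_r\).
\item Otherwise Lemma~\ref{lem:compression} applies. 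It gives a representation of the hyperedge relation of \(\mathcal G_{V'}\), with \(|V'|=|V|-|E|+1<|V|\), support family \(\mathcal S'\), and \(\cov_{V'}(\mathcal S')\ge\cov_V(\mathcal S)\). Again the pair is strictly smaller, so induction applies.
\end{enumerate}
In both cases the new covering number is at least the old one, so \(r'\ge r\ge1\) and the conclusion transfers via Remark~\ref{rem:basic}. The resulting formulas may carry parameters, but Remark~\ref{rem:basic} shows this does not affect \(r\)-dependence of \(T\).

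\emph{Main obstacle.} The delicate point is the complexity measure, which must strictly decrease in both cases.
\begin{itemize}
\item Removal drops \(E\) and possibly other members, so \(\sum|S|\) decreases.
\item Compression replaces \(V\) by the smaller \(V'\), but it adds \(\{\ast\}\) and keeps \(S\setminus E\) for \(S\not\subseteq E\). For \(S\) meeting \(E\) this shrinks \(S\), and \(|V'|\) drops by \(|E|-1\ge1\). The added singleton contributes \(1\), while \(E\) itself (size \(\ge2\)) disappears. So \(|V|+\sum|S|\) strictly decreases.
\end{itemize}
One should also double-check that the iteration terminates in the singleton base case, rather than at some family containing all of the current vertex set; the latter only occurs when the covering number is \(1\), which is excluded since the covering number never decreases and starts at \(r+1\ge2\). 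Finally, note that the singleton base case must also handle possibly repeated or empty parts. Parts of \(\mathcal G_V\) are infinite, so \(\IP_{|V|-1}\) there is straightforward.
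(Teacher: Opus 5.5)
Your proposal follows the paper's proof. You make the array $V$-indiscernible via Lemma~\ref{lem:indiscernibilization}, treat the all-singletons case directly, and otherwise pick a non-singleton $E\neq V$ and apply Lemma~\ref{lem:remove-support} or Lemma~\ref{lem:compression}, depending on whether $(a_S)_{S\ne E}$ is order-indiscernible. Your measure $|V|+\sum_{S\in\mathcal S}|S|$ does strictly decrease in both cases (by at least $2|E|-2\geq 2$ under compression). The paper instead uses lexicographic induction on $(|V|,|\mathcal S|)$, and either measure works.

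There is one small gap, in the singleton case. Your shattering argument needs $\mathcal S=\{\{q\}:q\in V\}$, not merely that every member of $\mathcal S$ is a singleton. If some $q\notin\bigcup\mathcal S$, there is no component $a_{\{q\}}$, so there is no row for $q$, or no object variable if $q=q_0$. The argument as written then fails, even though $\cov_V(\mathcal S)=|V|$ still holds. Your closing remark about ``repeated or empty parts'' does not address this. The paper closes it by first showing that any representation satisfies $\bigcup\mathcal S=V$. Fix the other coordinates and apply Lemma~\ref{lem:one-point} twice at an uncovered $q$. This gives $\bar p,\bar p'\in P_V$ with identical restrictions to every support but opposite $R$-values, contradicting \eqref{eq:represents}. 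With that observation added, the singleton case, and hence your proof, is complete.
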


\begin{proof}
We use lexicographic induction on
\((|V|,|\mathcal S|)\).  By
Lemma~\ref{lem:indiscernibilization}, we may assume that the array is
\(V\)-indiscernible.

First, \(\bigcup\mathcal S=V\).  Otherwise, for some
\(q\notin\bigcup\mathcal S\), fix the other coordinates and apply
Lemma~\ref{lem:one-point} twice with opposite incidences.  This gives
\(\bar p,\bar p'\in P_V\) that agree outside \(q\) and satisfy
\(R(\bar p)\leftrightarrow\neg R(\bar p')\). But the restrictions of $\bar p,\bar p'$ to
every support are equal, contradicting \eqref{eq:represents}.

If every support is a singleton, then
\(\mathcal S=\{\{q\}:q\in V\}\) and \(r=|V|-1\).  With any one
singleton block taken as the object variables, the representing
formula has \(\IP_r\): choose finite rows in the other parts and use
Lemma~\ref{lem:one-point} to realize each subset of their Cartesian
product.  This proves the proposition in this case.

So we can choose \(E\in\mathcal S\) with \(|E|\geq2\).  Since
\(\cov_V(\mathcal S)=r+1\geq2\), we have \(E\ne V\).
If \((a_S)_{S\in\mathcal S\setminus\{E\}}\) is not
order-indiscernible, Lemma~\ref{lem:remove-support} gives a
representation with support family $
 \mathcal S_0\subseteq\mathcal S\setminus\{E\}$, $
 r_0 := \cov_V(\mathcal S_0)-1\geq r$. 
The induction hypothesis gives \(\IP_{r_0}\), hence \(\IP_r\) by
Remark~\ref{rem:basic}.

Otherwise Lemma~\ref{lem:compression} gives a representation on $
 V'=\{\ast\}\mathbin{\dot\cup}(V\setminus E)$, $|V'|=|V|-|E|+1<|V|$ 
with \(r':=\cov_{V'}(\mathcal S')-1\geq r\).  The induction
hypothesis gives \(\IP_{r'}\), and hence \(\IP_r\).
\end{proof}

\section{The Composition Lemma}

\begin{theorem}\label{thm:composition}
Let \(d,k\geq1\), let \(\mathcal L\subseteq\mathcal L'\), and let
\(\mathcal M\) be an \(\mathcal L'\)-structure.  Let \(I\) be finite
and let \(\rho((x_\alpha)_{\alpha\in I})\) be an
\(\mathcal L\)-formula.  For each \(\alpha\in I\), let $
 S_\alpha\subseteq\{0,\ldots,d\}$ 
and let \(t_\alpha((y_s)_{s\in S_\alpha})\) be a
tuple-valued \(\mathcal L'\)-definable map whose values have the
same sorts as \(x_\alpha\).
Let 
\[
 \psi(y_0;y_1,\ldots,y_d)
 :=
 \rho\bigl(
   (t_\alpha((y_s)_{s\in S_\alpha}))_{\alpha\in I}
 \bigr).
\]
Taking $\mathcal S := \{S_\alpha:S_\alpha\ne\varnothing\}$ and  
 $c := \cov_{\{0,\ldots,d\}}(\mathcal S)$, if \(c\geq2\) and \(\psi\) has \(\IP_d\), then the theory of the
\(\mathcal L\)-reduct of \(\mathcal M\) has \(\IP_{c-1}\).

 In
particular, if $n \geq 1$, \(|S_\alpha|\leq k\) for every \(\alpha \in I\), $d = nk$ and  the $\mathcal{L}$-reduct is
\(n\)-dependent, then  \(\psi\) is $nk$-dependent.
\end{theorem}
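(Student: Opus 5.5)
The plan is to turn an $\IP_d$-pattern for $\psi$ into an $\mathcal S$-array representing the hyperedge relation $R$ of $\mathcal G_V$, with $V=\{0,\ldots,d\}$, inside a monster model of the $\mathcal L$-reduct. Proposition~\ref{prop:array-bound} then gives $\IP_{c-1}$.

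First, assume $\psi$ has $\IP_d$ in $\mathcal M$. By compactness, pass to a saturated $\mathcal L'$-elementary extension, still called $\mathcal M$. There we want elements $b_p$ for all $p\in P_q^{\mathcal G_V}$, $q\in[d]$, and $e_p$ for $p\in P_0^{\mathcal G_V}$, such that
\[
\psi(e_{p_0};b_{p_1},\ldots,b_{p_d})\ \Longleftrightarrow\ \mathcal G_V\models R(p_0,\ldots,p_d).
\]
To get them, note that any finite part of this requirement is a finite $\IP_d$-instance. Index the finitely many vertices of $P_q$ involved by $[m]$ and use $A=$ the trace of $R$, so the finite instance is satisfiable by Definition~\ref{def:ip}; compactness then yields the full family. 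We then set $y$-values $g_q(p):=b_p$ for $q\ge1$ and $g_0(p):=e_p$. For each nonempty $S\in\mathcal S$ define
\[
a_S((p_s)_{s\in S}):=\bigl(t_\alpha((g_s(p_s))_{s\in S_\alpha})\bigr)_{\alpha:\,S_\alpha=S}.
\]
For $\alpha$ with $S_\alpha=\varnothing$, the value $t_\alpha()$ is a fixed constant tuple, which we treat as a parameter. Let $\rho'$ be $\rho$ with its variables regrouped by support and these constants plugged in. Then $\rho'((a_S(\bar p|_S))_S)$ is literally $\psi(g_0(p_0);\ldots)$, so $(a,\rho')$ represents $R$ in the sense of Definition~\ref{def: array rep}.

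The key point is that $\rho'$ is an $\mathcal L$-formula with parameters, and the values of $a_S$ are just elements of $\mathcal M$. So this is a representation in $\mathcal M\restriction\mathcal L$, which we may take to be saturated (an $\mathcal L'$-saturated model has a saturated $\mathcal L$-reduct, or we can pass to a monster of the reduct). The $\mathcal L'$-definability of the $t_\alpha$ plays no further role. Proposition~\ref{prop:array-bound}, applied to $T=\mathrm{Th}(\mathcal M\restriction\mathcal L)$ with $r=c-1\ge1$, gives $\IP_{c-1}$ for some formula. We should check that the proposition tolerates parameters in $\rho$: Lemma~\ref{lem:indiscernibilization} is stated over the parameters of $\rho$, and Remark~\ref{rem:basic} removes parameters. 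We should also check that $\mathcal S$ is the right support family, with duplicated supports merged; the covering number $c$ is computed on exactly this family.

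For the particular case, take $\mathcal F\subseteq\mathcal S$ achieving $c$. Since $|S|\le k$, the sets of $\mathcal F$ cover at most $k|\mathcal F|$ points, and the remaining points are covered one each. Hence $d+1\le k|\mathcal F|+|V\setminus\bigcup\mathcal F|\le kc$, so $c\ge\lceil (nk+1)/k\rceil=n+1\ge2$. If $\psi$ had $\IP_{nk}$, the reduct would have $\IP_{c-1}$, hence $\IP_n$ by Remark~\ref{rem:basic}, contradicting $n$-dependence.

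The main obstacle I expect is the bookkeeping in the first step. The compactness construction must produce a genuine representation indexed by the countable $\mathcal G_V$ rather than by finite boxes, and the language and parameter issues must be handled so that Proposition~\ref{prop:array-bound} applies to the $\mathcal L$-theory, not to the $\mathcal L'$-theory.
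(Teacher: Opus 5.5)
Your proposal is correct and follows the paper's proof essentially step for step. You obtain an $\IP_d$-pattern indexed by $\mathcal G_V$ (doing the compactness argument directly where the paper cites \cite[Proposition~5.2]{CPT}), group the $t_\alpha$ by support with the constants from empty supports absorbed into $\rho$ as parameters to get an $\mathcal S$-array representing $R$ in the $\mathcal L$-reduct, apply Proposition~\ref{prop:array-bound}, and use the same covering count $d+1\leq kc$ to get $c\geq n+1$ when $d=nk$.
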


\begin{proof}
Put \(V=\{0,\ldots,d\}\). If \(\psi\) has \(\IP_d\), by
\cite[Proposition~5.2]{CPT}, after passing to an elementary
extension there are tuples \(b_p\), \(p\in\bigcup_{s\in V}P_s\),
where \(b_p\) has the sorts of \(y_s\) for \(p\in P_s\), such that $
\models  \psi(b_{p_0};b_{p_1},\ldots,b_{p_d})
 \ \Leftrightarrow \ 
 \mathcal G_V\models R((p_s)_{s\in V})$ for all $(p_s)_{s\in V}\in P_V$. 
For \(\alpha\in I\), let $
 a_\alpha((p_s)_{s\in S_\alpha})
 :=
 t_\alpha((b_{p_s})_{s\in S_\alpha})$. 
Then $
 \models \rho\bigl(
 (a_\alpha(\bar p|_{S_\alpha}))_{\alpha\in I}
 \bigr) \ \Leftrightarrow \ 
 \mathcal G_V\models R(\bar p)$ for all $\bar p\in P_V$. For \(S\in\mathcal S\) and \(\bar p\in P_S\), define $
 a_S(\bar p)
 :=
 (a_\alpha(\bar p))_{\alpha\in I,\ S_\alpha=S}$. 
After substituting in \(\rho\) the constant values from empty
supports and grouping its remaining variables accordingly, this
\(\mathcal S\)-array in the \(\mathcal L\)-reduct represents \(R\).
Hence
Proposition~\ref{prop:array-bound} gives \(\IP_{c-1}\).

If every set in $\mathcal{S}$ has size at most \(k\), then for every
\(\mathcal F\subseteq\mathcal S\), its members, together with the
uncovered coordinates as singletons, cover \(V\) by sets of size at
most \(k\).  Hence $
 d+1
 \leq
 k\left(
   |\mathcal F|+
   \left|V\setminus\bigcup\mathcal F\right|
 \right)$. 
Taking the minimum over \(\mathcal F\) gives $
 c\geq\left\lceil\frac{d+1}{k}\right\rceil$. 
Thus the reduct has
\(\IP_{\lceil(d+1)/k\rceil-1}\) whenever this index is positive, by
Remark~\ref{rem:basic}.  If \(d=nk\), then $
 \left\lceil\frac{nk+1}{k}\right\rceil-1=n$, 
contradicting \(n\)-dependence.
\end{proof}

\section{Optimality}\label{sec: Optimality}

\subsection{Dependence of the reduct}
The proof of Theorem~\ref{thm:composition} uses \(n\)-dependence of
the entire \(\mathcal L\)-reduct theory, rather than only
\(n\)-dependence of the formula \(\rho\): Lemma~\ref{lem:remove-support} uses an
arbitrary separating formula \(\xi\), which need not be an instance
or a Boolean combination of \(\rho\). In this section we demonstrate that simply assuming that the formula  $ \rho$ is $n$-dependent under arbitrary partitions of its variables is insufficient, already in the NIP case $n=1$ --- this gives a
negative answer to \cite[Problem~3.26(2)]{CHIII}. In fact, we obtain a more precise description of how much dependence of the base theory is needed.

We specialize to the setting of
\cite[Theorem~3.24]{CHIII}; namely, in the notation of Theorem~\ref{thm:composition}  we consider an $\mathcal{L}'$-formula  \begin{equation}\label{eq:CH-normal-form}
 \psi(y_0;y_1,\ldots,y_d)
 =
 \rho\bigl(
  t_0((y_j)_{j\ne0}),\ldots,
  t_d((y_j)_{j\ne d})
 \bigr),
\end{equation}
where \(\rho(x_0,\ldots,x_d)\) is an $\mathcal{L}$-formula and $t_\alpha$ are $d$-ary  \(\mathcal L'\)-definable maps on appropriate sorts. For \(i\leq d\), \(m\geq1\) and \(a\in[m]\), let \(x_i^a\)  be a 
tuple of variables of the same sorts as \(x_i\) (possibly with repetitions).   If
\(B(u_1,\ldots,u_m)\) is a propositional Boolean formula 
%and
%\[
% u_a:=\rho(x_0^a,\ldots,x_d^a)\qquad(a\in[m]), 
%\]
and \(\bar z\) contains every variable occurring in 
\((x_1^a : a \in [m])\) (and possibly any other variables), consider the $\mathcal{L}$-formula 
\begin{equation}\label{eq:CH-existential-formulas}
 \exists\bar z\,
 B\bigl(
   \rho(x_0^1,\ldots,x_d^1),\ldots,
   \rho(x_0^m,\ldots,x_d^m)
 \bigr).
\end{equation}

\begin{prop}\label{prop:CH-local-hypotheses}
Let \(\mathcal M\) be an $\mathcal{L}'$-expansion of an \(\mathcal L\)-structure
by the maps in \eqref{eq:CH-normal-form}.  Assume that, in the theory
of its \(\mathcal L\)-reduct, $
 \rho(x_1;x_0,x_2,\ldots,x_d)$ 
is NIP and every formula \eqref{eq:CH-existential-formulas} is NIP
under every partition of its free variables into two blocks (for all $m$ and $B$).  Then
\(\psi\) is \(d\)-dependent.
\end{prop}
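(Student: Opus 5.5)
The plan is to rerun the proof of Proposition~\ref{prop:array-bound} for the specific support family coming from \eqref{eq:CH-normal-form}, and to keep track of exactly which $\mathcal L$-formulas are used along the way. Put $V=\{0,\ldots,d\}$ and suppose, for a contradiction, that $\psi$ has $\IP_d$. As in the proof of Theorem~\ref{thm:composition}, \cite[Proposition~5.2]{CPT} yields a representation of $R$ via $\rho$. Its support family is
\[
\mathcal S=\{V\setminus\{\alpha\}:\alpha\le d\},
\]
with $a_{V\setminus\{\alpha\}}$ in the slot $x_\alpha$. This representation is symmetric in the coordinates of $V$, so the roles of the parts can be permuted freely. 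The key change from Section~3 is that I will not indiscernibilize with respect to full $\mathcal L$-types. Instead I use $\Delta$-types, where $\Delta$ is the family of formulas \eqref{eq:CH-existential-formulas}, closed under Boolean combinations and under the existential quantifications that the argument needs. The proof of Lemma~\ref{lem:indiscernibilization} goes through verbatim for $\Delta$-types, since Ramsey/compactness works for any set of formulas.

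There are two cases, according to whether $(a_S)_{S\ne E}$ is order-indiscernible for a suitable $E\in\mathcal S$.

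\emph{Compression case.} Take $E=V\setminus\{1\}$. This choice is allowed by the symmetry above, and it is made so that the ``hidden'' coordinate is the slot $x_1$. If $(a_S)_{S\ne E}$ is $\Delta$-order-indiscernible, I follow Lemma~\ref{lem:compression}. Here $\mathcal A=\varnothing$, $J=\{1\}$, and $S\setminus E=\{1\}$ for every $S\ne E$. So $\mathcal S'=\{\{\ast\},\{1\}\}$, and the produced formula $\theta$ is simply $\rho(x_1;x_0,x_2,\ldots,x_d)$, with $z_\ast$ in the slot $x_1$. Thus $\rho(x_1;x_0,x_2,\ldots,x_d)$ would have IP, contradicting the first hypothesis.

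The one point to check is the automorphism $\sigma_X$. It only requires equality of the relevant $\Delta$-types together with realizing $x_X$. I would replace it by the following: the $\Delta$-type of the tuple $\bigl(a_E(\bar d^X),(a_S(\bar p^X_{\bar i}))\bigr)$ is transferred, and an element $x_X$ is found by compactness. This needs the formulas $\exists z_\ast\,B(\rho(z_\ast,\ldots),\ldots)$ to lie in $\Delta$, which is exactly why $\bar z$ in \eqref{eq:CH-existential-formulas} is required to contain the $x_1^a$-variables.

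\emph{Removal case.} If $(a_S)_{S\ne E}$ is not $\Delta$-order-indiscernible, Lemma~\ref{lem:remove-support} applies. The separating formula $\xi$ is now in $\Delta$, that is, of the form \eqref{eq:CH-existential-formulas}. The new representation of $R$ has support family $\mathcal S_0\subseteq\mathcal S\setminus\{E\}$ and representing formula $\xi$. Any two distinct members of $\mathcal S$ already cover $V$. Hence either $\mathcal S_0$ is a single support $V\setminus\{\beta\}$, which misses the coordinate $\beta$ and is impossible by the first step of Proposition~\ref{prop:array-bound}, or $|\mathcal S_0|\ge2$.

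I then iterate. Removals shrink the family, and each compression (now with $\xi$ in place of $\rho$) collapses the problem to a bipartite representation. At that terminal stage the IP-witnessing formula is a member of $\Delta$ under some two-block partition of its free variables, which contradicts the second hypothesis. For the first step of the iteration I argue as follows: the family $\mathcal S_0\subseteq\mathcal S\setminus\{E\}$ again consists of co-singletons, so the compression step yields a formula of $\Delta$ with IP.

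I expect the main obstacle to be the closure bookkeeping. I must verify that every formula produced is genuinely of the shape \eqref{eq:CH-existential-formulas}: a Boolean combination of $\rho$-instances, existentially quantified over a block that contains all copies of the $x_1$-slot. In particular, after the substitutions in $\theta$, the grouping of variables in Lemma~\ref{lem:remove-support}, and the realization of $x_X$, the quantified variables must still include every $x_1^a$. What I would try first is to fix, at the outset, the labelling in which coordinate~$1$ is always the compressed or object coordinate. With that labelling I would check by induction that the formulas arising satisfy this invariant. This is the only place where the precise form of \eqref{eq:CH-existential-formulas} should matter.
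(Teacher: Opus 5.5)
\textbf{There is a genuine gap.} It sits in the closure bookkeeping you defer to the end: the invariant you hope to verify fails from the second stage on.

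\emph{What works.} Your first stage is fine. With $E=V\setminus\{1\}$, compression only needs order-indiscernibility for formulas $\exists z\bigwedge_{\bar i}\rho(x_0^{\bar i},z,x_2^{\bar i},\ldots,x_d^{\bar i})^{\epsilon_{\bar i}}$. These are instances of \eqref{eq:CH-existential-formulas}, and the stage ends with IP for $\rho(x_1;x_0,x_2,\ldots,x_d)$. In the removal branch, the new representing formula $\xi$ has all slot-$1$ variables bound, its free variables lie in slots $\beta\neq 1$, and its support family is $\mathcal S_0\subseteq\{V\setminus\{\beta\}:\beta\neq1\}$. If $|\mathcal S_0|=2$ you are done: fix the remaining coordinates, and $\xi$ itself has IP across its two blocks. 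This settles $d\le 2$.

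\emph{Where it fails.} Since coordinate $1$ is gone after the first removal, it can never again be the compressed coordinate, so your fixed-labelling idea cannot work. (The coordinates are also not interchangeable, since the hypotheses single out slot $1$.) For $d\ge 3$ you can have $|\mathcal S_0|\ge 3$. Compressing $E'=V\setminus\{\beta\}$ then requires order-indiscernibility of the remaining array for formulas
\[
\exists w_\beta\bigwedge_{\bar i}\xi(w_\beta,\ldots)^{\epsilon_{\bar i}}
\]
with arbitrary sign patterns. Both signs are unavoidable, because the pattern $X$ is arbitrary. Since $\neg\xi$ is universal in the slot-$1$ variables, these are $\exists\forall$-formulas. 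They are neither instances of \eqref{eq:CH-existential-formulas} nor Boolean combinations of such.

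If the array is not order-indiscernible for these formulas, Lemma~\ref{lem:remove-support} returns a separating formula $\xi'$ of this $\exists\forall$ shape. That $\xi'$ becomes the new representing formula, and eventually the IP witness. Your hypotheses say nothing about NIP of $\xi'$. So closing $\Delta$ under ``the existential quantifications that the argument needs'' leaves the class the hypotheses control, and no relabelling fixes this.

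\emph{How the paper avoids this.} As noted at the start of Section~\ref{sec: Optimality}, the Section~3 machinery uses an arbitrary separating formula. The paper therefore does not prove Proposition~\ref{prop:CH-local-hypotheses} that way. It reruns the type-counting induction of \cite[Lemma~3.23]{CHIII}, with the groupings of variables from \cite[Theorem~3.24]{CHIII}. The base case there is Sauer--Shelah for $\rho(x_1;x_0,x_2,\ldots,x_d)$. At every later stage, the only formulas whose NIP is invoked are the $\xi_{\varphi,n,\eta}$. These are purely existential conjunctions of $\rho$-literals with all coordinate-$1$ variables quantified, i.e.\ instances of \eqref{eq:CH-existential-formulas}.

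To salvage your route, you would need a mechanism that never places negations of an already-quantified representing formula under a new existential quantifier. The compression step as stated cannot provide this.
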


\begin{proof}
This follows from the induction in the proof of
\cite[Lemma~3.23]{CHIII}, with the successive groupings of variables
used in the proof of \cite[Theorem~3.24]{CHIII}.  Its base case is
the ordinary Sauer--Shelah bound for
\(\rho(x_1;x_0,x_2,\ldots,x_d)\).  At each later stage, the only use
of NIP is for one of the formulas denoted
\(\xi_{\varphi,n,\eta}\) there.  After undoing the grouping, it is an
existentially quantified conjunction of instances of \(\rho\) and
their negations in which all coordinate-\(1\) variables, and possibly
variables from other coordinates, are quantified.  Thus it is an
instance of \eqref{eq:CH-existential-formulas}.  The other steps of
the induction use no further NIP assumption, and its conclusion is
precisely the hypothesis used in \cite[Theorem~3.24]{CHIII}.
\end{proof}

We next show both that stability (hence also NIP) of \(\rho\) under every bipartition of its variables 
does not suffice, and that the number \(m\) in
\eqref{eq:CH-existential-formulas} cannot be bounded uniformly (part (3) explains why
Proposition~\ref{prop:CH-local-hypotheses} fixes the quantified
coordinates \((x_1^a : a \in [m])\)). 
%In particular, parts (1) and (4) for \(N=1\) give a
%negative answer to \cite[Problem~3.26(2)]{CHIII}.

\begin{prop}\label{prop:local-hypotheses-sharp}
For every \(d\geq2\) and \(N\geq1\), there are languages
\(\mathcal L\subseteq\mathcal L'\), an \(\mathcal L'\)-structure
\(\mathcal M_N\), an \(\mathcal L\)-formula
\(\rho(x_0,\ldots,x_d)\), and \(\mathcal L'\)-definable maps $
 t_i:\prod_{j\ne i}D_j\to D_i$ for $i\leq d$, 
where \(x_i\)  have sort \(D_i\), with the following
properties.  
\begin{enumerate}
\item For every \(\emptyset \neq J\subsetneq\{0,\ldots,d\}\), the
      formula $
       \rho((x_i)_{i\in J};(x_i)_{i\notin J})$ 
      is stable. 
\item Every formula \eqref{eq:CH-existential-formulas} with
      \(m\leq N\) is stable under every partition of its free
      variables.
\item The formula $
       \exists x_2\cdots\exists x_d\,
       \rho(x_0,x_1,\ldots,x_d)$ 
      has the independence property.
\item The formula $
       \psi_N(y_0;y_1,\ldots,y_d)
       :=\rho\bigl(
          t_0((y_j)_{j\ne0}),\ldots,
          t_d((y_j)_{j\ne d})
        \bigr)$ 
      has \(\IP_d\) (where \(y_i\)  have sort \(D_i\)).
\end{enumerate}
\end{prop}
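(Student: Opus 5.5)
We will build the examples by hand. The relation $\rho$ will express only that certain elements are \emph{codes} pointing at one another along a random hypergraph. The coding maps themselves will live in $\mathcal L'$ and not in $\mathcal L$. The guiding picture is the case $d=2$, $N=1$. Take a random bipartite graph $E\subseteq D_0\times D_1$ and a sort $D_2$ of codes with an injection $\pi$ from $E$ into $D_2$. Let $\rho(x_0,x_1,x_2)$ say ``$x_2=\pi(x_0,x_1)$''. Each $x_2$ codes at most one pair, and each pair $(x_0,x_2)$ or $(x_1,x_2)$ admits at most one completion. So every bipartition of $\rho$ is a relation in which one side has at most one neighbour, and it cannot have the order property (a ladder of length $2$ already fails). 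This gives (1). On the other hand, $\exists x_2\,\rho(x_0,x_1,x_2)$ is $E(x_0,x_1)$, which has the independence property; this gives (3). For general $d$ we replace $E$ by a random $(d+1)$-partite hypergraph $H$. We let the code coordinates record hyperedges, and we let each $t_i$ either be a coordinate projection or return the code of the tuple $(y_j)_{j\neq i}$. This is done so that, after composing, $\psi_N(y_0;y_1,\ldots,y_d)$ is equivalent to $H(y_0,\ldots,y_d)$, which has $\IP_d$; this gives (4). We must arrange that one code coordinate is computed from $(y_j)_{j\ne0}$ and another from $(y_j)_{j\ne 1}$, and that $\rho$ checks that these two partial codes ``match'' on a hyperedge. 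That way no single $t_i$ needs all $d+1$ inputs, and each individual code is still an injective partial object, so (1) survives.

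For (2) with a given $N$, the single-step coding is too weak. A conjunction of two instances of $\rho$ sharing $x_1$-variables can already re-assemble edges. We therefore plan to spread the information through chains. The hypergraph $H$ will not be coded directly. Instead, each hyperedge is split into $N+1$ linked code pieces, forming a tree-like (acyclic) incidence structure of partial injections. Recovering $H$ requires chaining $N+1$ instances of $\rho$ through quantified $x_1$-coordinates. We will take $\mathcal M_N$ to be the generic (Fra\"{\i}ss\'e) model of this incidence structure together with its coding maps, and $\mathcal L$ will contain only $\rho$. Then (3) follows from the existential projection over $x_2,\ldots,x_d$ (which, with the chain length absorbed into the code sorts, still recovers the random graph). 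Item (4) follows because the $\mathcal L'$-maps $t_i$ supply the full chain of codes directly.

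The main obstacle is (2). We must show that every formula \eqref{eq:CH-existential-formulas} with $m\le N$ is stable under all partitions of its free variables. Our plan is to prove a quantifier-elimination/description result for the $\mathcal L$-reduct: types are determined by the finite ``code-closure'' of a tuple, which is a forest of partial injections. Using this, we will show that at most $N$ instances with quantified $x_1$'s can only connect pieces of a single hyperedge's chain that do not reach both endpoints of the random relation. Such a formula is then equivalent to a Boolean combination of equalities and of relations of the form ``$u$ lies on the chain through $v$'', each of which has bounded-degree fibres and is therefore stable. We would first try to verify this for $N=1$, where it amounts to the degree-one argument above, and then do induction on the chain length.
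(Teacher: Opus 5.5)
Your proposal does not specify a structure, and the mechanism you plan for (2) cannot work. The obstruction already appears for $m=1$, before any recombination of instances. Take $d=2$; for larger $d$, also quantify $x_3,\ldots,x_d$ (which \eqref{eq:CH-existential-formulas} permits) and fix $y_3,\ldots,y_d$.

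Consider any design of the kind you describe: the $t_i$ output genuine codes so that $\psi_N\leftrightarrow H(y_0,y_1,y_2)$, and $\rho$ holds only on genuine code configurations of hyperedges of $H$ (your ``partial injections''). Then the $m=1$ formula $\chi(x_0;x_2):=\exists x_1\,\rho(x_0,x_1,x_2)$ satisfies $\chi\bigl(t_0(b,y_2),t_2(y_0,b)\bigr)\leftrightarrow H(y_0,b,y_2)$ for each fixed $b$. The reason is that $x_0$ and $x_2$ together already see $y_0,b,y_2$, and the only available witness is $t_1(y_0,y_2)$. The right-hand side is a random bipartite graph, so $\chi$ is unstable, for every $N$.

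So your claim that at most $N$ instances with quantified $x_1$'s cannot ``reach both endpoints'' is false already for one instance. Since (3) and (4) force a single instance of $\rho$ to carry $H$, splitting hyperedges into chains of length $N+1$ cannot repair this. What is missing is the opposite of injectivity in coordinate $1$: $D_1$ must contain extra ``fake'' witnesses that realize every consistent Boolean pattern of at most $N$ instances.

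The paper does exactly this. It takes $Y_i=\prod_{j\ne i}X_j$, interprets $\rho$ as $F(\Gamma)$ when the coordinate-$1$ argument lies in $Y_1$, and adjoins $Z_N=[W]^{\le N}$ to $D_1$, with $W=\prod_{i\ne1}Y_i$. When $p_1\in Z_N$, $\rho(p_0,\ldots,p_d)$ means $(p_i)_{i\ne1}\in p_1$. Then $\exists z\bigwedge_{a\in I_z}\rho(\ldots,z,\ldots)^{\varepsilon_a}$ is equivalent to $\bigwedge_{\varepsilon_a\ne\varepsilon_b}w_a\ne w_b$, witnessed by $z=\{w_a:\varepsilon_a=1\}$ since $|I_z|\le m\le N$. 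Hence every formula in (2) collapses to pure equality. Item (1) survives because on the $Z_N$ part, fixing the side containing coordinate $1$ leaves at most $N$ completions. Items (3) and (4) only evaluate $\rho$ with its coordinate-$1$ argument in $Y_1$.

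Your base case is also wrong as stated. In the guiding example $\rho(x_0,x_1,x_2)\leftrightarrow x_2=\pi(x_0,x_1)$, items (1)--(3) do hold for $N=1$, but (4) fails. Fix $y_0=e$ and rows $b_i,c_j$, and write $t_2(e,b_i)=\pi(\mu_i,\nu_i)$. Then $\psi(e;b_i,c_j)$ holds iff $t_0(b_i,c_j)=\mu_i$ and $t_1(e,c_j)=\nu_i$. So at most $2^{O(m\log m)}$ subsets of $[m]^2$ are cut out, and $\psi$ is not $\IP_2$.

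Your suggested repair matches only the codes computed from $(y_j)_{j\ne0}$ and $(y_j)_{j\ne1}$ on a hyperedge, and this breaks (1) instead. After fixing the coordinates those two codes share, a suitable bipartition separating $x_0$ from $x_1$ induces a random bipartite graph. What makes (1) hold in the paper is $F(\Gamma)$: every $x_i$ is the $d$-subtuple of the hyperedge omitting coordinate $i$, so any two coordinates determine the hyperedge (this uses $d\ge2$). The $Z_N$ part then takes care of (2).
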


\begin{proof}
Put \(V := \{0,\ldots,d\}\).  Let \(X_i\), for \(i\in V\), be
countably infinite sets, and let $
 \Gamma\subseteq\prod_{i\in V}X_i$ 
be the edge relation of the countable generic
\((d+1)\)-partite \((d+1)\)-uniform hypergraph.  For \(i\in V\), put $
 Y_i:=\prod_{j\ne i}X_j$, 
and define $
 F:\prod_{i\in V}X_i\longrightarrow\prod_{i\in V}Y_i$ via  $
 F((a_i)_{i\in V}) := ((a_j)_{j\ne i})_{i\in V}$. 
Let
\[
 W:=\prod_{i\ne1}Y_i,
 \qquad
 Z_N:=[W]^{\leq N}=\{A\subseteq W:|A|\leq N\},
\]
where \(Z_N\) is taken disjoint from \(Y_1\), and set
\[
 D_1:=Y_1\mathbin{\dot\cup}Z_N,
 \qquad
 D_i:=Y_i\quad(i\ne1).
\]
Let \(\mathcal L\) be the many-sorted language with sorts \(D_i\), a
unary predicate \(Z\) on \(D_1\) naming \(Z_N\), and the relation
\(\rho\) on $\prod_{i\in V}D_i$ interpreted by
\begin{equation}\label{eq:bounded-membership}
 \rho(p_0,\ldots,p_d)
 \quad :\Longleftrightarrow\quad
 \begin{cases}
  (p_i)_{i\in V}\in F(\Gamma) &\textrm{ when } p_1\in D_1 \setminus Z_N,\\
 (p_i)_{i\ne1}\in p_1 &\textrm{ when } p_1\in Z_N.
 \end{cases}
\end{equation}
This defines an \(\mathcal L\)-structure, and we specify its $\mathcal{L}'$-expansion \(\mathcal M_N\) below.

First consider the two definable pieces of \(\rho\) given by
\(\neg Z(p_1)\) and \(Z(p_1)\).  Any two coordinates of a tuple in
the image of \(F\) determine its preimage.  Since \(d\geq2\), in
every bipartition of \(V\) one side contains at least two coordinates,
and fixing that side leaves at most one completion in \(\neg Z(p_1)\).  In \(Z(p_1)\), fixing the side containing coordinate
\(1\) leaves at most \(N\) completions.  A formula with uniformly
bounded fibers on one side is stable, and stable formulas are closed
under Boolean combinations.  This proves (1).

For (2), put \(B\) in full disjunctive normal form. As Boolean combinations preserve stability, it is enough to
consider a conjunction
\[
 \bigwedge_{a\in[m]}
 \rho(x_0^a,\ldots,x_d^a)^{\varepsilon_a},
 \qquad
 \varepsilon_a\in\{0,1\},
\]
where \(\varphi^1=\varphi\) and
\(\varphi^0=\neg\varphi\).  Fix a variable \(z\) of sort \(D_1\)
occurring among the \(x_1^a\), let
\(I_z := \{a\in[m]:x_1^a=z\}\), and put
\(w_a :=(x_i^a)_{i\ne1}\).  Then 
\begin{equation}\label{eq:eliminate-coordinate}
 \exists z\bigwedge_{a\in I_z}
 \rho(x_0^a,z,x_2^a,\ldots,x_d^a)^{\varepsilon_a}
 \quad\Longleftrightarrow\quad
 \bigwedge_{\substack{a,b\in I_z\\
                       \varepsilon_a\ne\varepsilon_b}}
 w_a\ne w_b.
\end{equation}
The right-hand side is clearly necessary.  Conversely, if it holds, then $
 c^{\ast} := \{w_a:a\in I_z\text{ and }\varepsilon_a=1\}\in Z_N$ 
is a witness, since \(|I_z|\leq m\leq N\).  Applying
\eqref{eq:eliminate-coordinate} to every variable occurring in
coordinate \(1\) reduces the formula to one in pure equality, with
possibly some variables from the other coordinates still
existentially quantified.  Such a formula is stable under every
partition.  

For (3), fix \(c_i\in X_i\) for \(2\leq i\leq d\).  For
\(a\in X_1\) and \(b\in X_0\), put
\[
 p_0(a):=(a,c_2,\ldots,c_d)\in Y_0,
 \qquad
 p_1(b):=(b,c_2,\ldots,c_d)\in Y_1.
\]
Then
\[
 \exists p_2\cdots\exists p_d\,
 \rho(p_0(a),p_1(b),p_2,\ldots,p_d)
 \quad\Longleftrightarrow\quad
 \Gamma(b,a,c_2,\ldots,c_d).
\]
The relation on the right is the countable generic bipartite graph,
so the formula in (3) has the independence property.

Finally, as each \(D_i\) is countably infinite, choose bijections
\(e_i:D_i\to X_i\), let \(\mathcal L'\) add function symbols \(t_i\),
and expand by setting $
 t_i((p_j)_{j\ne i}):=(e_j(p_j))_{j\ne i}\in Y_i\subseteq D_i$. 
Then $
 (t_i((p_j)_{j\ne i}))_{i\in V}
 =F((e_i(p_i))_{i\in V})$, 
and hence $
 \models \psi_N(p_0;p_1,\ldots,p_d) \Leftrightarrow 
 \Gamma(e_0(p_0),\ldots,e_d(p_d))$. 
Thus \(\psi_N\) has \(\IP_d\), proving (4).
\end{proof}

\subsection{Sharpness of the numerical bounds}

\begin{prop}\label{prop:array-sharp}
The bound in Proposition~\ref{prop:array-bound} is sharp.
\end{prop}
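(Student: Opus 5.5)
The plan is to show that, for every finite $V$ and every family $\mathcal S$ of nonempty subsets of $V$ with $c:=\cov_V(\mathcal S)\geq 2$, there is a theory $T$ and an $\mathcal S$-array representing $R$ in a model of $T$, such that $T$ does not have $\IP_{c}$. Then the index $r=c-1$ in Proposition~\ref{prop:array-bound} cannot be increased. Take $T$ to be the theory of the countable random $c$-partite $c$-uniform hypergraph $H$, in the language with one sort $Q_j$ for each $j\in[c]$ and one $c$-ary relation $E_H$. By \cite{CPT}, this theory has quantifier elimination in a language whose relations have arity $\leq c$. Hence it is $c$-dependent, i.e.\ it has no $\IP_c$. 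I will invoke this as known.

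\emph{Step 1 (blocks).} Fix $\mathcal F\subseteq\mathcal S$ attaining $c$, say $\mathcal F=\{F_1,\ldots,F_f\}$, and let the uncovered points be $q_{f+1},\ldots,q_c$. Shrink the members of $\mathcal F$ to pairwise disjoint nonempty sets $B_j\subseteq F_j$ with $\bigcup_{j\le f}B_j=\bigcup\mathcal F$. No $B_j$ becomes empty: otherwise $F_j$ could be dropped from $\mathcal F$, contradicting minimality. Put $B_j=\{q_j\}$ for $j>f$. Each such singleton $\{q_j\}$ lies in some member of $\mathcal S$, since the proof of Proposition~\ref{prop:array-bound} only needs $\bigcup\mathcal S=V$ and any $S\ni q_j$ works. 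Assume this, or else note that $\bigcup\mathcal S\neq V$ makes representations impossible, so that case is vacuous. This gives a partition $V=B_1\sqcup\cdots\sqcup B_c$, where each $B_j$ is contained in some $S_j\in\mathcal S$, chosen with the $S_j$ distinct for $j\le f$. For $j>f$ the chosen $S_j$ may coincide with other chosen sets; this is harmless because arrays may output tuples.

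\emph{Step 2 (pushforward hypergraph).} Define the $c$-partite hypergraph $H'$ with parts $P_{B_j}$ and
\[
H'\models E(\bar u_1,\ldots,\bar u_c)\iff \mathcal G_V\models R(\bar u_1\cup\cdots\cup \bar u_c).
\]
I will check that $H'$ satisfies the extension axioms of the random $c$-partite hypergraph. Given finitely many tuples in the parts $P_{B_i}$, $i\neq j$, and prescribed incidences for a new element of $P_{B_j}$, take any fresh tuple in $P_{B_j}$. Choose one coordinate $q\in B_j$, fix the remaining coordinates, and apply Lemma~\ref{lem:one-point} to a new $p\in P_q$ with the desired $\epsilon$. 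Freshness of $p$ makes the new tuple distinct from the old ones, and the incidences are exactly the required ones. Since $H'$ is countable, back-and-forth gives an isomorphism $g=(g_j)_j:H'\to H$, with $g_j:P_{B_j}\to Q_j$.

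\emph{Step 3 (the array).} For $S\in\mathcal S$, let $a_S(\bar p)$ be the tuple of values $g_j(\bar p|_{B_j})$ over those $j$ with $S_j=S$, and let it be a fixed constant if there are no such $j$. This is legitimate because $B_j\subseteq S_j$. Let $\rho$ be $E_H$ applied to the appropriate entries of these tuples. By construction,
\[
\rho\bigl((a_S(\bar p|_S))_{S\in\mathcal S}\bigr)\iff H\models E_H\bigl(g_1(\bar p|_{B_1}),\ldots,g_c(\bar p|_{B_c})\bigr)\iff R(\bar p).
\]
So this is a representation of $R$ with support family $\mathcal S$ in a model of the $c$-dependent theory $T$.

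The main obstacle is Step~1's bookkeeping: guaranteeing that every block $B_j$ sits inside a single support, so that $a_S$ depends only on $\bar p|_S$. The main external input is the citation that the random $c$-partite $c$-uniform hypergraph is $c$-dependent. The verification of the extension axioms in Step~2 is routine via Lemma~\ref{lem:one-point}.
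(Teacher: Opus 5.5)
Your argument is correct, and it proves more than the paper does.

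\textbf{The paper's route.} The paper only treats support families of the special shape $\{\{\ast\},E_1,\ldots,E_s\}$ on $\{\ast\}\sqcup Q$. Here the $E_j$ cover $Q$ and $\cov_Q(\{E_1,\ldots,E_s\})=s$, so every member is needed. It embeds each $P_{E_j}$ injectively into the part $Z_j$ of a saturated generic $(s+1)$-partite hypergraph. The covering of $Q$ makes $\bar p\mapsto (b_j(\bar p|_{E_j}))_j$ injective. Then the extension property plus saturation give, for each $u\in P_\ast$, a point $b_\ast(u)\in Z_0$ realizing the fibre of $R$ at $u$. No disjointness of the $E_j$ is needed, because all the freedom is placed in the singleton coordinate $\ast$.

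\textbf{Your route.} You handle an arbitrary $\mathcal S$ with $\bigcup\mathcal S=V$ and $\cov_V(\mathcal S)\geq 2$. A minimum cover yields a partition $V=B_1\sqcup\cdots\sqcup B_c$ with each block inside some support. Your nonemptiness argument works for any such disjointification, since an empty $B_j$ would make $F_j$ redundant. You then use Lemma~\ref{lem:one-point} to check that the block pushforward of $R$ satisfies the extension axioms. So it is isomorphic to the countable random $c$-partite hypergraph, not merely embedded into a saturated one.

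\textbf{What each buys.} Your route costs a back-and-forth argument and a transfer from the countable model to the monster model. The transfer is harmless, since \eqref{eq:represents} is preserved under elementary embeddings. In return, it shows that $\cov_V(\mathcal S)-1$ is the exact optimal index for every individual support family. The paper's shorter construction only shows that the bound is attained for some family of each covering number.
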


\begin{proof}
Let \(Q\) be a nonempty finite set, and let
\(\mathcal E=\{E_1,\ldots,E_s\}\) be a family of nonempty subsets of
\(Q\) such that $
 Q=\bigcup_{j=1}^sE_j$, $
 \cov_Q(\mathcal E)=s$.  We show that there is an \((s+1)\)-dependent theory in which the generic
\(\{\ast\}\mathbin{\dot\cup}Q\)-partite hypergraph has a
representation with support family $
 \{\{\ast\},E_1,\ldots,E_s\}$.

Let \(H(x;z_1,\ldots,z_s)\) be the edge relation of the unordered
generic \((s+1)\)-partite \((s+1)\)-uniform hypergraph, with parts
\(Z_0,\ldots,Z_s\).  Its theory is \((s+1)\)-dependent and \(H\) has
\(\IP_s\), by \cite[Example~2.2(2)]{CPT}. Work in a sufficiently saturated model of this theory.  Order \(Q\)
arbitrarily and put \(V=\{\ast\}\mathbin{\dot\cup}Q\), with
\(\ast\) first.  For \(1\leq j\leq s\), choose an injection $
 b_j:P_{E_j}\longrightarrow Z_j$. 
Because the \(E_j\)'s cover \(Q\), the tuples $
 (b_j(\bar p|_{E_j}))_{1\leq j\leq s}$ 
are pairwise distinct for $\bar p\in P_Q$.  For each \(u\in P_\ast\), the extension
property and saturation give \(b_\ast(u)\in Z_0\) such that $
 H\bigl(
   b_\ast(u);
   b_1(\bar p|_{E_1}),\ldots,b_s(\bar p|_{E_s})
 \bigr)
 \ \Leftrightarrow \ 
 \mathcal G_V\models R(u,\bar p)$ for all $\bar p\in P_Q$. 
These maps give the required representation.
The covering number $\cov_{V}(
 \{\{\ast\},E_1,\ldots,E_s\})$ of the support of this representation  is
\(1+\cov_Q(\mathcal E)=s+1\), while the ambient theory has
\(\IP_s\) but not \(\IP_{s+1}\).
\end{proof}

\begin{prop}\label{prop:composition-sharp}
For all \(n,k\geq1\) with \(nk\geq2\), the bound \(nk\) in
Theorem~\ref{thm:composition} cannot be lowered. 
\end{prop}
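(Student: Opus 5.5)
We will show that for all \(n,k\geq1\) with \(nk\geq2\) there is an \(n\)-dependent \(\mathcal L\)-structure, a formula \(\rho\), and \(\mathcal L'\)-definable maps with supports of size at most \(k\), such that the composed formula \(\psi\) in \(d:=nk\) variable blocks \(y_0;y_1,\ldots,y_{nk}\) has \(\IP_{nk-1}\). Thus \(\psi\) is not \((nk-1)\)-dependent, and the bound \(nk\) cannot be lowered. The construction follows the pattern of Proposition~\ref{prop:array-sharp}.

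\textbf{The base theory and \(\rho\).} Let \(\rho=H(x;z_1,\ldots,z_n)\) be the edge relation of the generic \((n+1)\)-partite \((n+1)\)-uniform hypergraph, with parts \(Z_0,\ldots,Z_n\). By \cite[Example~2.2(2)]{CPT} its theory is \(n\)-dependent, so \(H\) is \(n\)-dependent and the reduct is \(n\)-dependent. We work in a saturated (hence countable-universal) model \(\mathcal N\).

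\textbf{The maps.} Partition the index set \(\{0,\ldots,nk-1\}\) of the \(nk\) coordinates we will actually use into \(n+1\) blocks \(E_0,\ldots,E_n\), each of size at most \(k\). This is possible because \(nk\leq(n+1)k\). Arrange, for instance, that \(E_0=\{0\}\) and that the remaining \(nk-1\) indices are split into \(n\) blocks of size at most \(k\); this also works since \(nk-1\leq nk\). The remaining variable \(y_{nk}\) is left as a dummy that no map uses.

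Take a countable \(V\)-partite generic hypergraph \(\mathcal G\) on \(V=\{0,\ldots,nk-1\}\). For \(j\geq1\), pick injections \(b_j:P_{E_j}\to Z_j\). Because the \(E_j\) cover \(V\setminus\{0\}\), distinct \(\bar p\in P_{V\setminus\{0\}}\) give distinct tuples \((b_j(\bar p|_{E_j}))_{j}\). By saturation, choose for each \(u\in P_0\) an element \(b_0(u)\in Z_0\) such that
\[
H\bigl(b_0(u);(b_j(\bar p|_{E_j}))_j\bigr)\Longleftrightarrow R(u,\bar p).
\]
Now let the sorts of the \(y_s\) be countable new sorts identified with the \(P_s\), and let \(\mathcal L'\) add the functions \(t_j:=b_j\) with supports \(S_j=E_j\), each of size at most \(k\). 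Then
\[
\psi(y_0;y_1,\ldots,y_{nk})=H\bigl(t_0(y_0);t_1(\bar y|_{E_1}),\ldots,t_n(\bar y|_{E_n})\bigr)
\]
represents the generic \(nk\)-partite hypergraph in the first \(nk\) blocks. We check \(\IP_{nk-1}\) directly: take finite rows in the parts \(1,\ldots,nk-1\) and fix any value for \(y_{nk}\). Lemma~\ref{lem:one-point} then realizes, by some \(u\in P_0\), an arbitrary subset of the product of these rows.

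\textbf{Main obstacle.} The only delicate point is that adding new sorts and arbitrary functions must not affect the \(\mathcal L\)-reduct. We handle this by taking \(\mathcal M\) to be the disjoint-sort expansion: the \(\mathcal L\)-reduct is exactly \(\mathcal N\) together with pure sets. A reduct of this kind stays \(n\)-dependent, since pure infinite sets are NIP, and adding an orthogonal sort preserves \(n\)-dependence by a straightforward check on the many-sorted quantifier structure.

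The case \(nk\geq2\) is needed so that \(nk-1\geq1\) and the claim of \(\IP_{nk-1}\) is meaningful. When \(n=1\) we get \(E_0=\{0\}\) and \(E_1=\{1,\ldots,k-1\}\), which is consistent with the construction. If the dummy coordinate is considered inelegant, it can instead be absorbed into a block whose size is less than \(k\); when no such block exists, leaving it unused is harmless, since \(\IP_{nk-1}\) is witnessed by fixing \(y_{nk}\).
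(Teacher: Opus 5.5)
\textbf{There is a genuine gap: your base theory is not $n$-dependent.} The edge relation $H(x;z_1,\ldots,z_n)$ of the generic $(n+1)$-partite $(n+1)$-uniform hypergraph itself has $\IP_n$. Given finite rows in $Z_1,\ldots,Z_n$, the extension property realizes every subset of their product by some $x\in Z_0$. So \cite[Example~2.2(2)]{CPT} gives only $(n+1)$-dependence, which is exactly how the paper uses that example in Proposition~\ref{prop:array-sharp}. For $n=1$ your $\rho$ is the random bipartite graph, which has IP, so you are not even working over an NIP reduct. Your example therefore only shows that compositions over an $(n+1)$-dependent theory can have $\IP_{nk-1}$. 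That says nothing about the bound $nk$.

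The off-by-one enters in your count $nk\leq(n+1)k$. An $n$-dependent base of this kind has only $n$ parts, hence only $n$ blocks. With $E_0=\{0\}$, the remaining $nk-1$ coordinates would then have to be covered by $n-1$ blocks of size at most $k$. This is impossible as soon as $k\geq2$. For $k=1$ your scheme does work once the base is replaced by the $n$-partite hypergraph.

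\textbf{The missing idea is that the block containing the object coordinate must also be full.} Partition $\{0,\ldots,nk-1\}=B_0\mathbin{\dot\cup}\cdots\mathbin{\dot\cup}B_{n-1}$ into $n$ blocks of size exactly $k$ with $0\in B_0$. Use the generic $n$-partite $n$-uniform hypergraph; for $n=1$ this is an infinite set with an infinite coinfinite predicate. Then let the map on $B_0$ take $y_0$ together with the $k-1$ parameter coordinates in $B_0\setminus\{0\}$.

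In your notation, keep injections $t_j$ on $P_{B_j}$ for $j\geq1$. For $(u,\bar\xi)\in P_{B_0}$, use saturation to choose $t_0(u,\bar\xi)\in Z_0$ whose $H$-link against the pairwise distinct tuples $(t_j(\bar\eta|_{B_j}))_{1\leq j<n}$ is exactly $\{\bar\eta: R(u,\bar\xi,\bar\eta)\}$. The paper does the same thing with $y_0$ ranging over $\mathcal P(\omega^J)$ and $f_0(A,\xi):=p_{A,\xi}$ coding the slice $\{\eta:\xi\cup\eta\in A\}$.

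With that change, the rest of your argument goes through: the extra sorts are pure disjoint sets in the reduct, and $\IP_{nk-1}$ is read off as before.
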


\begin{proof}
Fix a partition $
 \{0,\ldots,nk-1\}
 =
 B_0\mathbin{\dot\cup}\cdots\mathbin{\dot\cup}B_{n-1}$ with $ |B_j|=k$ and $ 0\in B_0$. 
 Put \(J=\{1,\ldots,nk-1\}\) and
\(C=\bigcup_{1\leq j<n}B_j\).
Then \(J=(B_0\setminus\{0\})\mathbin{\dot\cup}C\).
Let \(\mathcal H\) be a sufficiently saturated unordered generic
\(n\)-partite \(n\)-uniform hypergraph, with parts
\(Z_0,\ldots,Z_{n-1}\) and edge relation \(H\).  For \(n=1\), this
means an infinite set with an infinite and coinfinite unary
relation.

For \(s\in J\), let \(Y_s :=\{c_i^s:i<\omega\}\) with \(c_i^s\) distinct, and let $
 Y_0 := \mathcal P(\omega^J)$.  The theory of the
\(\mathcal L\)-structure consisting of \(\mathcal H\) and the
disjoint sorts \(Y_s\) (with no additional structure) eliminates quantifiers and is
\(n\)-dependent by
\cite[Example~2.2(3)]{CPT} when \(n\geq2\); for \(n=1\), it is
stable. For \(1\leq j<n\), choose an injection $
 f_j:\prod_{s\in B_j}Y_s\longrightarrow Z_j$. 
For \(\eta\in\omega^C\), let $
 q_{\eta,j}
 :=
 f_j((c_{\eta(s)}^s)_{s\in B_j})$ for $1\leq j<n$. 
The tuples \((q_{\eta,j})_{1\leq j<n}\) are pairwise distinct.
For \(A \in Y_0\) and
\(\xi\in\omega^{B_0\setminus\{0\}}\), the extension and  saturation
 give \(p_{A,\xi}\in Z_0\) such that
\[
\models  H(p_{A,\xi},q_{\eta,1},\ldots,q_{\eta,n-1})
 \quad\Longleftrightarrow\quad
 \xi\cup\eta\in A
 \qquad \textrm{ for all } \eta\in\omega^C.
\]
When \(n=1\), this reads
\(H(p_{A,\xi}) \Leftrightarrow \xi\in A\).
Define $
 f_0:
 Y_0\times\prod_{s\in B_0\setminus\{0\}}Y_s
 \longrightarrow Z_0$
by $
 f_0\bigl(
   A,(c_{\xi(s)}^s)_{s\in B_0\setminus\{0\}}
 \bigr)
 :=
 p_{A,\xi}$, and let
\[
 \theta(y_0;y_1,\ldots,y_{nk-1})
 :=
 H\bigl(
  f_0((y_s)_{s\in B_0}),\ldots,
  f_{n-1}((y_s)_{s\in B_{n-1}})
 \bigr).
\]
For \(A\subseteq\omega^J\) and \(\iota\in\omega^J\), 
 $\theta\bigl(A;(c_{\iota(s)}^s)_{s\in J}\bigr) \ \Leftrightarrow \  \iota\in A$. 
Thus \(\theta\) has \(\IP_{nk-1}\).
\end{proof}
\bibliographystyle{plain}
\bibliography{CompositionLemma}

\end{document}